\documentclass[finalversion]{pmsart}[2006/01/13]


\usepackage{amsmath,amssymb,amsthm,bm,mathtools,xspace,booktabs,url,tikz,color}

\def\bqny#1{\begin{eqnarray*} #1 \end{eqnarray*}}
\def\bqn#1{\begin{eqnarray} #1 \end{eqnarray}}
\newcommand{\pk}[1]{\mathbb{P} \left( #1 \right) }
\newcommand{\COM}[1]{}

\newcommand{\indi}{\mathbb{I}}
\newcommand{\RL}{\mathbb{R}} 
\newcommand{\nat}{{\mathbb N}} 

\newcommand{\e}{\mathrm{e}} 
\newcommand{\dd}{\mathrm{d}} 
\newcommand{\oh}{{\mathrm{o}}} 
\newcommand{\Oh}{{\mathrm{O}}} 

\newcommand{\Exp}{\mathbb{E}}
\newcommand{\Var}{\mathbb{V}ar}

\newcommand{\Prob}{\mathbb{P}}

\newcommand{\argmin}{\operatornamewithlimits{argmin}}

\newcommand{\halmoss}{{\mbox{\, \vspace{3mm}}} \hfill
\mbox{$\Diamond$}}

\newcommand{\pii}{\pi}
\newcommand\convdistr{
  \xrightarrow{\:\scriptscriptstyle\smash{\mathcal{D}}\:}
}

\newcommand{\Fb}{\overline F}

\newcommand{\cH}{{\mathcal H}}\newcommand{\cHb}{\overline{\mathcal H}}

\newcommand\ginv{
  \tikz[baseline=-0.4ex] \draw[<-] (0,0)--(0.4em,0);
}

\newcommand{\bfy}{\mathbf{y}}
\newcommand{\bfY}{\mathbf{Y}}
\newcommand{\bfX}{\mathbf{X}}
\newcommand{\bfx}{\mathbf{x}}

\renewcommand{\mathsf}{\mathrm} 
\newcommand{\GMDA}{\mathsf{GMDA}}

\renewcommand{\epsilon}{\varepsilon}


\volume{0}%
\fasc{0}%
\years{0000}%
\pages{000--000}%
\firstpage{1} %
\lastrevision{xx.xx.xxxx}

\title{Tail asymptotics of light-tailed Weibull-like sums}
\titlethanks{}
\dedicated{}
\ShortTitle{} 
\ShortAuthors{S{\o}ren Asmussen \etal}
\NumberOfAuthors{4}

\received{}
\FirstAuthor{\mbox{S{\O}ren} Asmussen}
\FirstAuthorCity{Aarhus}
\FirstAuthorAffiliation{Aarhus University}
\FirstAuthorAddress{Department of Mathematics, Aarhus University,
					Ny Munkegade, DK-8000 Aarhus C, Denmark}
\FirstAuthorEmail{asmus@imf.au.dk}                              
\FirstAuthorThanks{}                             

\SecondAuthor{\mbox{Enkelejd} Hashorva} 
\SecondAuthorCity{Lausanne} %
\SecondAuthorAffiliation{University of Lausanne}%
\SecondAuthorAddress{Department of Actuarial Science, University of Lausanne, UNIL-Dorigny, 1015 Lausanne, Switzerland}%
\SecondAuthorEmail{Enkelejd.Hashorva@unil.ch}%
\SecondAuthorThanks{}%

\ThirdAuthor{\\ \mbox{Patrick J.} Laub}
\ThirdAuthorCity{Brisbane}%
\ThirdAuthorAffiliation{University of Queensland \& Aarhus University}%
\ThirdAuthorAddress{Department of Mathematics, The University of Queensland, Brisbane, Queensland 4072, Australia}%
\ThirdAuthorEmail{p.laub@uq.edu.au}
\ThirdAuthorThanks{}%

\FourthAuthor{\mbox{Thomas} Taimre}
\FourthAuthorCity{Brisbane}%
\FourthAuthorAffiliation{University of Queensland}%
\FourthAuthorAddress{Department of Mathematics, The University of Queensland, Brisbane, Queensland 4072, Australia}%
\FourthAuthorEmail{t.taimre@uq.edu.au}%
\FourthAuthorThanks{}%

\FifthAuthor{} 
\FifthAuthorCity{} %
\FifthAuthorAffiliation{}%
\FifthAuthorAddress{} %
\FifthAuthorEmail{}%
\FifthAuthorThanks{}%

\keywords{Gumbel MDA; Laplace transform; Rare event simulation; Sums of of random variables; Weibull distribution}                                     

\MSCcodes{Primary: 62E20, 60F05; Secondary: 60E15, 65C05, 60F10.}               

\begin{document}

\maketitle

\begin{abstract}
We consider sums of $n$ i.i.d.\ random variables with tails close to $\exp\{-x^\beta\}$ for some $\beta>1$. Asymptotics  developed
by Rootz\'en (1987) and Balkema, Kl\"uppelberg \& Resnick (1993) are discussed from the point of view of tails rather of densities, using a somewhat different angle,  and supplemented with bounds, results on a random number $N$ of terms, and simulation algorithms.
\end{abstract}

\section{Introduction}\label{S:Intr}

Let $X,X_1,\ldots,X_n$ be i.i.d.\ with common distribution $F$. A recurrent theme in applied probability is then
to determine the order of magnitude of the tail $\Prob(S_n>x)$ of their sum $S_n=X_1+\cdots+X_n$.

The results vary according to the heaviness  of the tail $\Fb=1-F$ of $F$. In the heavy-tailed case, defined as the $X$ for which $\Exp\e^{sX}=\infty$ for all $s>0$, there is the subexponential class in which the results take a clean form (see e.g.\ \cite{EKM} or \cite{RP}). In fact, by the
very definition of subexponentiality, we have $\Prob(S_n>x)$ $\sim$ $n\Fb(x)$ as $x\to\infty$ where $\Fb(x)=\Prob(X>x)$. The main examples are regularly varying $\Fb(x)$, lognormal $X$, and Weibull tails $\Fb(x)=\e^{-cx^\beta}$ where $0<\beta<1$.

In the light-tailed case, defined as the $X$ for which $\Exp\e^{sX}<\infty$ for some $s>0$, the most standard asymptotic regime is not $x\to\infty$ but rather
$x=x_n$ going to $\infty$ at rate $n$. For example, let $x_n=nz$ for some $z$, where typically $z>\Exp X$ in order to make the problem a rare-event one.
Under some regularity conditions, the sharp asymptotics are then given by the saddlepoint approximation $\Prob(S_n>x)$ $\sim$
$c(z)\e^{-nI(z)}/n^{1/2}$ for suitable $c(z)$ and $I(z)$, cf.\ \cite{JLJ}. This is a large deviations result, describing how likely
it is for $S_n$ to be far from the value $n\Exp X$ predicted by the LLN. However, in many applications the focus is rather on a small
or moderate $n$, i.e.\ the study of  $\Prob(S_n>x)$ as $x\to\infty$ with $n$ fixed.

The basic light-tailed explicit examples in this setting are the exponential distribution, the gamma distribution,
the inverse Gaussian distribution,  and the normal distribution. The tail of $F$ is exponential or close-to-exponential for exponential, gamma and inverse Gaussian distributions; this is the borderline between light and heavy tails, and the analysis of tail behaviour is relatively simple in this case (we give a short summary
later in Section~\ref{C1C2}).
The most standard class of distributions with a lighter tail is formed by the Weibull distributions where
$\Fb(x)=\e^{-cx^\beta}$ for some $\beta>1$. For $\beta=2$, this is close to the normal distribution, where (by its well-known Mill's ratio) $\Fb(x)\sim \e^{-x^2/2}/(\sqrt{2\pii}x)$ when $F = \Phi$ is the standard normal law. The earliest study of  tail properties of $S_n$ may be that
of \cite{Rootzen} which was later followed up by the mathematically deeper and somewhat general study of Balkema, Kl\"uppelberg, \& Resnick \cite{BKR}, henceforth referred to as BKR. The setting of both papers is densities.

Despite filling an obvious place in the theory of  tails of sums, it has been our impression that this theory is less known
than it should be. This was confirmed by a Google Scholar search which gave only 27 citations of BKR,
most of which were even rather peripheral.
One reason may be that the title \emph{Densities with Gaussian tails} of BKR is easily misinterpreted,
another the heavy analytic flavour of the paper. Also note that the focus of~\cite{Rootzen} is somewhat different and
the set of results we are interested in here appears as a by-product at the end of that paper.

The purpose of the present paper is twofold: to present a survey from a somewhat different angle than BKR, in the 
hope of somewhat remedying this situation; and to supplement the theory with various new results.
In the survey part, the aim has been  simplicity and intuition more than generality.
In particular, we avoid considering convex conjugates and some non-standard
central limit theory developed in Section~6 of BKR. These tools are mathematically deep and elegant, but
not really indispensable for developing what we see as the main part of the theory.
Beyond this expository aspect, our contributions are: to present the main results and their conditions
in terms of  tails rather than densities;  to develop simple upper and lower
bounds; to study the case of a random
number of terms $N$, more precisely properties of $\Prob(S_N>x)$
when $N$ is an independent Poisson r.v.; and to look into simulation
aspects.

The precise assumptions on the distribution $F$ in the paper vary somewhat depending on the context and progression of
the paper. The range goes from the vanilla Weibull tail $\Fb(x)=\e^{-cx^\beta}$ via an added power in the asymptotics,
$\Fb(x)\sim dx^{\alpha}\e^{-cx^\beta}$, to the full generality of the BKR set-up. Here $cx^\beta$ is replaced by a smooth convex
function $\psi(x)$ satisfying $\psi'(x)\to\infty$ and the density has the form $\gamma(x)\e^{-\psi(x)}$ for a function $\gamma$ which
is in some sense much less variable than $\psi$ (the precise regularity conditions are given in Section~\ref{S:BKR}).

\section{Heuristics}\label{S:Heur}

With heavy tails, the basic intuition on the tail behaviour of $S_n$ is the principle of
a single big jump; this states that a large value of $S_n$ is typically caused by
one summand being large while the rest take ordinary values.
A rigorous formulation of this can be proved in a few lines from the very definition
of subexponentiality, see e.g.~\cite[p.\,294]{RP}. With light tails, the folklore is that if
$S_n$ is large, say $S_n\approx x$, then all $X_i$ are of the same order $x/n$.

This suggest that the asymptotics of $\Prob(S_n>x)$ are essentially determined by the
form of $F$ locally around $x/n$. A common type of such local behaviour is
that $\Fb\bigl(x+e(x)y\bigr)$ $\sim$ $ \Fb(x)\e^{-y}$ for some positive  function $e(x)$
as $x\to\infty$ with $y\in\RL$ fixed; this is abbreviated as $F \in \GMDA(e)$. Equivalently,
\begin{equation}\label{17.12a}
\Lambda\bigl(x+e(x)y\bigr)\ \sim\ \Lambda(x) + y
\end{equation}
where $\Lambda(x)=-\log\Fb(x)$.
Here one can take $e(x)=\Exp[X - x \mid X > x]$, the so-called \emph{mean excess function}; if $F$ admits a density $f(x)$,   an alternative asymptotically equivalent choice is the \emph{inverse hazard rate} $e(x) = 1/\lambda(x)$ where $\lambda(x)=\Lambda'(x)=f(x)/\Fb(x)$.

In fact, \eqref{17.12a} is a necessary and sufficient condition for $F$
to be in $\GMDA(e)$, the maximum domain of attraction of the Gumbel distribution \cite{EKM}. Even if this condition may look
special at first sight, it covers the vast majority of well-behaved light-tailed distributions, with
some exceptions such as certain discrete distributions like the geometric or Poisson.

From these remarks one may proceed for $n=2$
from the convolution,
\begin{align} \label{Convolution}
	\nonumber \MoveEqLeft \Prob(X_1 + X_2 >x) = (f \ast \Fb)(x)
	= \int_{-\infty}^\infty\lambda(z) \exp\bigl\{-\Lambda(z)-\Lambda(x-z)\bigr\}\,\dd z \\
	&= \int_{-\infty}^\infty
	\frac{ e(x/2) }{ e\Big(x/2 + e(x/2)y \Big)  } \exp \Big\{ {-} \Lambda \Big(\frac{x}{2} + e\Big(\frac{x}{2}\Big) y \Big)-\Lambda \Big(\frac{x}{2}- e\Big(\frac{x}{2}\Big) y \Big) \Big\} \,\dd y,
\end{align} 
where we have substituted $z=x/2+e(x/2) y$. First note that if $\lambda(x)$ tends to 0 as $x\to \infty$ and is differentiable, we can expand $\Lambda$ about $y=0$ as
\begin{align*}
	\Lambda \Big(\frac{x}{2} + e\Big(\frac{x}{2}\Big) y \Big) &\sim \Lambda \left(\frac{x}{2}\right) + y +\frac{\lambda '\left(\frac{x}{2}\right)}{2 \lambda \left(\frac{x}{2}\right)^2} \,y^2 \,.
\end{align*}
By defining $\sigma^2(u)=\lambda(u)^2/2\lambda'(u)$ and repeating this argument we get that
\begin{align} \label{LambdaAsym}
	\Lambda \Big(\frac{x}{2} \pm e\Big(\frac{x}{2}\Big) y \Big) &\sim \Lambda \left(\frac{x}{2}\right) \pm y +\frac{y^2}{4 \sigma^2(\frac{x}{2})}.
\end{align}
Also we will use that $e(x)$ is self-neglecting, i.e.\ $\forall\, t$, $e\bigl(x+e(x)t\bigr)$ $\sim$ $e(x)$ as $x\to\infty$, as is well-known and easy to prove from \eqref{17.12a}.
Combining \eqref{LambdaAsym} and the self-neglecting property with \eqref{Convolution} gives us
\begin{align}\nonumber \MoveEqLeft
\Prob(X_1+X_2>x) \sim \int_{-\infty}^\infty 1 \cdot \exp \Big\{ {-}2 \Lambda \left(\frac{x}{2}\right) - \frac{y^2}{2 \sigma^2(\frac{x}{2})} \Big\} \,\dd y \\
\label{16.11c}&=\ \sqrt{2\pii\sigma^2(x/2)} \, \exp\bigl\{-2\Lambda(x/2)\bigr\} \,.
\end{align}
In summary, rewriting \eqref{16.11c} gives
\begin{equation}\label{17.12b}
\overline{F^{*2}}(x)\ =\ \Prob(X_1+X_2>x)\ \sim\ \Fb(x/2)^2\sqrt{\pii \frac{\lambda(x/2)^2}{\lambda'(x/2)}}\,.
\end{equation}
The key issue in making this precise is to keep better track of the second order term in the Taylor
expansion, as discussed later in the paper.

\begin{commentaryRemark}\label{Rem:17.12a}
The procedure to arrive at \eqref{17.12b} is close to the Laplace method for obtaining integral
asymptotics. Classically, the integral in question has the form $\int_a^b\e^{-\theta h(z)}\,\dd z$
and one proceeds by finding the $z_0$ at which $h(z)$ is minimum and performing a
second order Taylor expansion around $z_0$. Here, we neglected the   $\lambda(z)$  in front
and took the relevant analogue  of $z_0$ as $x/2$ which is precisely the minimizer
of $\Lambda(x-z)+\Lambda(z)$.
\halmoss\end{commentaryRemark}

\begin{commentaryRemark}\label{Rem:17.12b} If $X_1,X_2$ have different distributions $F_1,F_2$, the
above calculations suggest that $X_1+X_2 > x$ will occur roughly when $X_1\approx z(x)$,
$X_2\approx x-z(x)$ where $z=z(x)$ is the solution of $\lambda_1(z)=\lambda_2(x-z)$.
In fact, this is what is needed to make the first order Taylor terms cancel. For example, if $\Fb_1(x)=\e^{-x^{\beta_1}}$, $\Fb_2(x)=\e^{-x^{\beta_2}}$
with $\beta_2<\beta_1$,
 we get $z(x)\sim cx^\eta$ where $\eta=(\beta_2-1)/(\beta_1-1)<1$,
 $c=(\beta_2/\beta_1)^{1/(\beta_1-1)}$. This type of heuristic is an
 important guideline when designing importance sampling algorithms,
 cf.~\cite[V.1,\,VI.2]{SSAA}.
\halmoss\end{commentaryRemark}

\section{Weibull-like sums}\label{S:Sums}

We now make the heuristics of preceding section rigorous for the case of
different distributions $F_1,F_2$ of $X_1,X_2$ such
that the densities $f_1,f_2$ satisfy
\begin{equation}\label{SA4.2a} f_i(x)\ \sim d_ix^{\alpha_i+\beta-1}\e^{-c_i x^\beta}\,,\quad x\to\infty,\ i=1,2
\end{equation}
for some common $\beta>1$, where the $\alpha_i$ can take any value in $(-\infty,\infty)$ and $c_i,d_i$ are positive ($i=1,2$).

We start by some analytic preliminaries. Given \eqref{SA4.2a}, we define
\begin{equation}\label{SA4.2aa}
\eta=c_1^{1/(\beta-1)}+c_2^{1/(\beta-1)},\ \
\theta_1=c_2^{1/(\beta-1)}/\eta,\ \ \theta_2=c_1^{1/(\beta-1)}/\eta,\ \ 
\kappa=\frac{\eta^{\beta-1}}{\beta c_1c_2}\,.
\end{equation}
Note that
\begin{equation}\label{9.11d}
\displaystyle \Fb_i(x)\,\sim\,\frac{d_i}{\beta c_i}x^{\alpha_i}\e^{-c_i x^\beta}
\end{equation}
(hence $c_i=1$, $d_i=\beta$, $\alpha_i=0$ corresponds to the traditional Weibull tail $\e^{-x^\beta}$).
Define the excess function of $F_i$ by $e_i(x)=\Fb_i(x)/f_i(x)$. Thus
$e_i(x)$ is the inverse hazard rate and has asymptotics $x^{1-\beta}/(\beta c_i) $ with limit 0 as $x\to\infty$.

\begin{lemma}\label{Lemma:9.11a} Define
$c\,=\, c_1\theta_1^\beta+c_2\theta_2^\beta$.
Then $c<\min(c_1,c_2)$, $\theta_1+\theta_2=1$, and
\begin{equation}\label{11.11b}
e_1(\theta_1 x) \sim e_2(\theta_2 x) \sim \frac{\kappa}{x^{\beta-1}} = \frac{1}{\beta c_1\theta_1^{\beta-1}x^{\beta-1}} = \frac{1}{\beta c_2\theta_2^{\beta-1}x^{\beta-1}}.
\end{equation}
\end{lemma}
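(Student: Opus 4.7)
The lemma is essentially a string of algebraic identities glued to the standard asymptotic for the inverse hazard rate of a Weibull-like density, so my plan is to dispatch the three claims in the order they are stated and let each one set up the next.

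First, the identity $\theta_1+\theta_2=1$ is immediate from the definition of $\eta$: summing the numerators of $\theta_1$ and $\theta_2$ just gives back $\eta$. For the inequality $c<\min(c_1,c_2)$, the clean route is to substitute the definitions of $\theta_1,\theta_2$ into $c=c_1\theta_1^\beta+c_2\theta_2^\beta$ and factor. Writing $\theta_i^\beta=c_{3-i}^{\beta/(\beta-1)}/\eta^\beta$ and pulling out $c_1c_2/\eta^\beta$ gives
\begin{equation*}
c \;=\; \frac{c_1c_2\bigl(c_1^{1/(\beta-1)}+c_2^{1/(\beta-1)}\bigr)}{\eta^\beta} \;=\; \frac{c_1c_2}{\eta^{\beta-1}}.
\end{equation*}
Since $\beta>1$ and both $c_1,c_2>0$, the sum $\eta=c_1^{1/(\beta-1)}+c_2^{1/(\beta-1)}$ strictly exceeds each of $c_1^{1/(\beta-1)}$ and $c_2^{1/(\beta-1)}$; raising to the $(\beta-1)$st power yields $\eta^{\beta-1}>\max(c_1,c_2)$, and dividing shows $c<c_1c_2/\max(c_1,c_2)=\min(c_1,c_2)$. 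As a byproduct, comparing with the definition of $\kappa$ gives the convenient relation $\kappa=1/(\beta c)$, which is worth recording for later use in the paper.

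Next, for the asymptotic in \eqref{11.11b} I would combine \eqref{SA4.2a} with \eqref{9.11d} to get the inverse-hazard-rate expansion
\begin{equation*}
e_i(x) \;=\; \frac{\Fb_i(x)}{f_i(x)} \;\sim\; \frac{(d_i/\beta c_i)\,x^{\alpha_i}\e^{-c_ix^\beta}}{d_i\,x^{\alpha_i+\beta-1}\e^{-c_ix^\beta}} \;=\; \frac{1}{\beta c_i x^{\beta-1}},
\end{equation*}
valid for each $i=1,2$ as $x\to\infty$. Substituting $\theta_ix$ in place of $x$ produces the two middle expressions in \eqref{11.11b}. What remains is to check that these two expressions coincide and equal $\kappa/x^{\beta-1}$. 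This is another algebraic verification: from $\theta_1=c_2^{1/(\beta-1)}/\eta$ we read off $\theta_1^{\beta-1}=c_2/\eta^{\beta-1}$, and similarly $\theta_2^{\beta-1}=c_1/\eta^{\beta-1}$, so that $\beta c_1\theta_1^{\beta-1}=\beta c_1c_2/\eta^{\beta-1}=\beta c_2\theta_2^{\beta-1}=1/\kappa$.

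There is no real obstacle here: the only place a reader might pause is the inequality $c<\min(c_1,c_2)$, and the crucial simplification $c=c_1c_2/\eta^{\beta-1}$ makes it transparent. The rest is bookkeeping with the definitions of $\theta_1,\theta_2,\eta,\kappa$, combined with the elementary observation that $e_i(x)\sim 1/(\beta c_ix^{\beta-1})$ is forced by the assumed form of $f_i$ and the tail $\Fb_i$ in \eqref{9.11d}.
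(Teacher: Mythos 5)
Your proof is correct and follows essentially the same route as the paper: both establish the key simplification $c=c_1c_2/\eta^{\beta-1}$ and then compare $\eta^{\beta-1}$ to $c_1$ and $c_2$ (the paper drops one term of $\eta$ at a time, you use $\eta^{\beta-1}>\max(c_1,c_2)$, which is the same idea). You spell out the parts the paper dismisses as "obvious" — the computation of $e_i(x)\sim 1/(\beta c_i x^{\beta-1})$ from \eqref{SA4.2a} and \eqref{9.11d}, and the algebraic identities linking $\theta_i$, $\eta$, $\kappa$ — and your byproduct observation $\kappa=1/(\beta c)$ is a nice bonus.
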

\begin{proof} All statements are obvious except $c<\min(c_1,c_2)$. But
\begin{align}\label{SA4.2b}
c\ &=\ c_1\theta_1^{\beta-1}\theta_1+c_2\theta_2^{\beta-1}\theta_2
\ =\ \frac{c_1c_2\theta_1}{\eta^{\beta-1}}
+\frac{c_1c_2\theta_2}{\eta^{\beta-1}}\ =\ \frac{c_1c_2}{\eta^{\beta-1}}\\ 
\nonumber &<\ \frac{c_1c_2}{{[c_2^{1/(\beta-1)}]}^{\beta-1}}
\ =\ c_1.
\end{align}
Similarly, $c<c_2$.
\end{proof}

\begin{lemma}\label{Lemma:10.11a} $\displaystyle  (1+h)^\beta = 1+h\beta +\frac{h^2}{2}
\beta(\beta-1)\omega(h)$
where $\omega(h)\to 1$ as $h\to 0$ and
$\underline \omega_\epsilon=\inf_{-1+\epsilon<h<\epsilon^{-1}}\omega(h)>0$ for all $\epsilon>0$.
\end{lemma}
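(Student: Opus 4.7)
The statement simply expands $(1+h)^\beta$ via its second‐order Taylor remainder and asks for two properties of the remainder factor $\omega$: pointwise convergence at $0$, and a uniform positive lower bound on compacta bounded away from $h=-1$.

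My plan is to define
\[
\omega(h) \;=\; \frac{2\bigl[(1+h)^\beta-1-\beta h\bigr]}{\beta(\beta-1)\,h^2} \qquad (h\in(-1,\infty)\setminus\{0\}),
\]
and set $\omega(0):=1$. The identity in the lemma then holds by construction. The first claim, $\omega(h)\to 1$ as $h\to 0$, is immediate from a second-order Taylor expansion of $(1+h)^\beta$ at $0$: differentiating twice gives $\beta(\beta-1)$ for the quadratic coefficient, so the ratio defining $\omega$ tends to $1$. In particular $\omega$ is continuous at $0$, and it is obviously continuous on $(-1,\infty)\setminus\{0\}$ as a quotient of smooth functions whose denominator does not vanish there.

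For the uniform lower bound, the key point is strict positivity of $\omega$ on the whole interval $(-1,\infty)$. Writing $g(h)=(1+h)^\beta-1-\beta h$, one has $g(0)=g'(0)=0$ and $g''(h)=\beta(\beta-1)(1+h)^{\beta-2}>0$ for $h>-1$ (using $\beta>1$), so $g'$ is strictly increasing with $g'(0)=0$; therefore $g(h)>0$ for every $h\in(-1,\infty)\setminus\{0\}$. Since $\beta(\beta-1)>0$ and $h^2>0$, this yields $\omega(h)>0$ on $(-1,\infty)\setminus\{0\}$, and $\omega(0)=1>0$.

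Given any $\epsilon>0$, the closed interval $[-1+\epsilon,\epsilon^{-1}]$ is compact and contained in $(-1,\infty)$, so by continuity $\omega$ attains its infimum there at some point $h_\star$; by strict positivity, $\omega(h_\star)>0$. Since $(-1+\epsilon,\epsilon^{-1})\subset[-1+\epsilon,\epsilon^{-1}]$, this minimum is a lower bound for $\underline{\omega}_\epsilon$, giving $\underline{\omega}_\epsilon>0$. The only minor subtlety — and the one step I would be most careful about — is verifying continuity at $h=0$ once $\omega(0)$ is defined as the limit, but this is immediate from the Taylor expansion used in the first half of the argument, so no real obstacle arises.
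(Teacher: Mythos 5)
Your proof is correct, but it organizes the argument differently from the paper's. The paper invokes the Lagrange form of the second-order Taylor remainder to write $\omega(h)=(1+h^*)^{\beta-2}$ for some $h^*$ between $0$ and $h$, and then obtains the uniform lower bound by direct casework on the sign of $h$ and on whether $1<\beta\le 2$ or $\beta\ge 2$. You instead define $\omega$ explicitly as a ratio, prove strict positivity of the numerator $g(h)=(1+h)^\beta-1-\beta h$ from the strict convexity of $g$ (since $g''(h)=\beta(\beta-1)(1+h)^{\beta-2}>0$ on $(-1,\infty)$ together with $g(0)=g'(0)=0$), and then obtain the uniform bound from continuity of $\omega$ (including at $h=0$ after setting $\omega(0)=1$) plus compactness of $[-1+\epsilon,\epsilon^{-1}]$. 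Your route avoids the four-way case split entirely and is arguably cleaner; what it gives up is the paper's explicit form $\omega(h)=(1+h^*)^{\beta-2}$, which makes the dependence of $\underline\omega_\epsilon$ on $\epsilon$ and $\beta$ transparent (one can read off an explicit lower bound such as $\min(1,\epsilon^{\beta-2},(1+\epsilon^{-1})^{\beta-2})$), whereas the compactness argument is non-constructive. Both are complete proofs.
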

\begin{proof}  By standard Taylor expansion results, $\omega(h)=(1+h^*)^{\beta-2}$.
where $h^*$ is between $0$ and $h$. The statement on $\underline \omega_\epsilon$ follows from this by considering
all four combinations of the cases $h\le 0$ or $h>0$, $1<\beta\le 2$ or $\beta\ge 2$ separately.
\end{proof}

The key result is the following. It allows, for example,  
to determine the asymptotics of the tail
or density of $F^{*n}$ in the Weibull-like class by a straightforward 
induction argument, see Corollary~\ref{SACor4.2} below.
\begin{theorem}\label{Th:9.11a} Under assumption \eqref{SA4.2a},
$\displaystyle \Prob(X_1+X_2>x)\ \sim\ k x^{\gamma}\e^{-cx^\beta}$ as $x\to\infty$, where
$ \gamma=\alpha_1+\alpha_2+\beta/2$ and
$k\,=\, d_1 d_2 \theta_1^{\alpha_1} \theta_2^{\alpha_2} \kappa \eta^{1-\beta} (2\pii \sigma^2)^{1/2} / \beta$,
with $\theta_1,\theta_2,\kappa,\eta$ as in \eqref{SA4.2a}, the constant $c$ as in Lemma~\ref{Lemma:9.11a}, and $\sigma^2$ determined
by 
\[\frac{1}{\sigma^2}=\frac{1}{\sigma_1^2}+\frac{1}{\sigma_2^2}\quad\text{where }\frac{1}{\sigma_i^2}\ =\ \beta(\beta-1)c_i\theta_i^{\beta-2}\kappa^2 \,. \]
Further the density of $X_1+X_2$ has asymptotic form
$\beta ck x^{\gamma+\beta-1}\e^{-cx^\beta}$.
\end{theorem}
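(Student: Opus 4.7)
The plan is to establish the density asymptotics first and then derive the tail by integration. Start from the convolution $f_{X_1+X_2}(x) = \int_0^x f_1(z) f_2(x-z)\,\dd z$ and substitute $z = \theta_1 x + s$, so that the exponential factor in the integrand becomes $\e^{-g(s)}$ with $g(s) = c_1(\theta_1 x + s)^\beta + c_2(\theta_2 x - s)^\beta$. Direct use of \eqref{SA4.2aa} gives $g(0) = cx^\beta$ and $g'(0) = 0$, identifying $s=0$ as the global minimum; this is the saddlepoint anticipated in Remark~\ref{Rem:17.12b}. Applying Lemma~\ref{Lemma:10.11a} to each of $c_i(\theta_i x \pm s)^\beta$ and using the cancellation of first-order terms yields
\[g(s) - cx^\beta = \tfrac{1}{2}\beta(\beta-1)\, x^{\beta-2}\bigl[c_1\theta_1^{\beta-2}\omega(s/(\theta_1 x)) + c_2\theta_2^{\beta-2}\omega(-s/(\theta_2 x))\bigr] s^2.\]

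Next, I split the range of $s$ into (i) $|s| \le x^{1-\beta/2+\epsilon}$ for small $\epsilon > 0$, (ii) $x^{1-\beta/2+\epsilon} \le |s| \le \zeta x$ for suitably small $\zeta > 0$, and (iii) $|s| \ge \zeta x$. On region (i), rescale $y = s\sqrt{g''(0)}$, so that $\exp\{-(g(s)-cx^\beta)\} \to \e^{-y^2/2}$ pointwise and $f_i(\theta_i x + s)/[d_i(\theta_i x)^{\alpha_i+\beta-1}\e^{-c_i(\theta_i x)^\beta}] \to 1$ since $s/x \to 0$. Dominated convergence, with an $x$-independent Gaussian envelope available through the uniform lower bound $\underline\omega_\epsilon > 0$ of Lemma~\ref{Lemma:10.11a}, then produces
\[\int_{|s|\le x^{1-\beta/2+\epsilon}} \!\! f_1(\theta_1 x + s) f_2(\theta_2 x - s)\,\dd s \,\sim\, d_1 d_2 (\theta_1 x)^{\alpha_1+\beta-1}(\theta_2 x)^{\alpha_2+\beta-1} \sqrt{\frac{2\pii}{g''(0)}}\,\e^{-cx^\beta}.\]
On region (ii), the quadratic lower bound $g(s)-cx^\beta \ge C_\zeta x^{\beta-2} s^2 \ge C_\zeta x^{2\epsilon}$ (again via $\underline\omega_\epsilon$) gives decay of at least $\e^{-C_\zeta x^{2\epsilon}}$ relative to the main term. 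On region (iii), convexity of $g$ together with $g(\pm\zeta x)-cx^\beta \ge C_\zeta'\, x^\beta$ forces the integrand to be exponentially smaller than $\e^{-cx^\beta}$. Both tail regions are therefore negligible.

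Finally, assemble the leading term and simplify using $c = c_1c_2/\eta^{\beta-1}$, $\theta_i^{\beta-1} = c_{3-i}/\eta^{\beta-1}$, $\beta c\kappa = 1$, and $\kappa^2\sigma^2 = 1/[\beta(\beta-1)(c_1\theta_1^{\beta-2}+c_2\theta_2^{\beta-2})]$ to recognise the density asymptotics as $\beta c k\, x^{\gamma+\beta-1}\e^{-cx^\beta}$. The tail claim then follows by integrating this density from $x$ to $\infty$ via a single integration by parts (driving the differentiation onto $\e^{-cy^\beta}$), which gives $\Prob(X_1+X_2>x) \sim k x^\gamma \e^{-cx^\beta}$. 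The main obstacle will be constructing the uniform Gaussian dominating envelope needed for region (i) together with the quadratic lower bound on region (ii); both rely essentially on the uniform lower bound $\underline\omega_\epsilon > 0$ in Lemma~\ref{Lemma:10.11a}, and with that in hand the remaining bookkeeping to match the explicit constant $k$ is routine.
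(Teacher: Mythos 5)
Your plan is a genuinely different route from the paper's. The paper goes for the \emph{tail} directly via the mixed convolution $\int_{a_-x}^{a_+x} f_1(z)\Fb_2(x-z)\,\dd z$, with the cutoff to $[a_-x,a_+x]$ justified by the probabilistic bound
\[
\Prob\bigl(X_1+X_2>x,\,X_1\notin[a_-x,a_+x]\bigr)\ \le\ \Prob(X_1>a_+x)+\Prob\bigl(X_2>(1-a_-)x\bigr),
\]
and then remarks that the density asymptotic follows by the same computation. You invert the order: Laplace asymptotics for $f_1*f_2$ first, then integrate $h(y)=\beta ck\,y^{\gamma+\beta-1}\e^{-cy^\beta}$ from $x$ to $\infty$. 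Your saddlepoint identification, the second--order expansion $g(s)-cx^\beta=\tfrac{1}{2}\beta(\beta-1)x^{\beta-2}\bigl[c_1\theta_1^{\beta-2}\omega(s/(\theta_1 x))+c_2\theta_2^{\beta-2}\omega(-s/(\theta_2 x))\bigr]s^2$, the region (i) dominated-convergence argument via $\underline\omega_\epsilon$, the algebra matching $k$, and the final tail-from-density step are all correct. The approach is legitimate, and in the paper's version the tail-first order has a small technical advantage (the factor $\Fb_2$ is automatically bounded by $1$, whereas $f_2$ need not be).

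The gap is in your handling of region (iii). You argue that convexity of $g$ together with $g(\pm\zeta x)-cx^\beta\ge C'_\zeta x^\beta$ makes the integrand exponentially smaller than $\e^{-cx^\beta}$, but that argument controls $\e^{-g(s)}$, not the actual integrand $f_1(\theta_1 x+s)f_2(\theta_2 x-s)$. The hypothesis on $f_i$ is only the \emph{asymptotic} $f_i(y)\sim d_i y^{\alpha_i+\beta-1}\e^{-c_i y^\beta}$ as $y\to\infty$; near the endpoints $s\to-\theta_1 x$ or $s\to\theta_2 x$, one of the arguments $\theta_1 x+s$ or $\theta_2 x-s$ is bounded (or tends to $0$), the asymptotic says nothing there, and the density may even be unbounded if some $\alpha_i+\beta-1<0$. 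You need a separate device for that boundary strip — either the paper's probabilistic cutoff, or a bound of the form $\int_{\zeta x}^{\theta_2 x} f_1(\theta_1 x+s)f_2(\theta_2 x-s)\,\dd s\le \sup_{u\ge(\theta_1+\zeta)x}f_1(u)\cdot\int f_2$, exploiting that $f_1$ is eventually decreasing. Once you do this correctly, the claim that $\zeta$ is ``suitably small'' reverses: the boundary estimate gives $\e^{-c_1(\theta_1+\zeta)^\beta x^\beta}$ (and its mirror), and since $c_1\theta_1^\beta<c$, this is negligible relative to $\e^{-cx^\beta}$ only if $\zeta$ is \emph{large enough}, namely $c_1(\theta_1+\zeta)^\beta>c$ and $c_2(\theta_2+\zeta)^\beta>c$; in general you will want asymmetric cutoffs, as the paper's $a_-\neq 1-a_+$ allows. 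So region (iii) as written does not close, and the assertion about the permissible range of $\zeta$ needs to be flipped.
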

\begin{commentaryRemark}\label{Rem:9.11a}\rm If $F_1=F_2$ and $c_1=c_2=1$, then $\theta_1=\theta_2=1/2$ and
$c=1/2^{\beta-1}$ in accordance with Section~\ref{S:Heur}.
\halmoss\end{commentaryRemark}
\begin{proof}
By Lemma~\ref{Lemma:9.11a}, we can choose $0<a_-<a_+<1$ such that $a_+^\beta c_2>c$, $(1-a_-) c_1>c$.
Then
\[\Prob\bigl (X_1+X_2>x,X_1\not \in [a_-x,a_+x]\bigr)\ \le \Prob(X_1>a_+x)+ 
\Prob(X_2>(1-a_-)x)
\]
is $\oh(x^{\gamma}\e^{-cx^\beta})$ and so it suffices to show that
\begin{equation}\label{9.11a}\Prob\bigl (X_1+X_2>x, a_-x<X_1<a_+x\bigr)
\ =\ \int_{a_-x}^{a_+x} f_1(z)\Fb_2(x-z)\,\dd z\end{equation}
has the claimed asymptotics. The last expression together with $a_->0$, $a_+<1$ also
shows that the asymptotics is a tail property so that w.l.o.g.\ we may assume that
$e_i(\theta_i x)=\kappa/x^{\beta-1}$,  implying  that \eqref{11.11b} holds with equality.

Now
\begin{align}\nonumber \MoveEqLeft  \Prob\bigl (X_1+X_2>x, a_-<X_1<a_+x\bigr)\ =\
\int_{a_-x}^{a_+x} f_1(z)\Fb_2(x-z)\,\dd z\\ \label{11.11a} &= \ \int_{a_-x}^{a_+x} \frac{d_1d_2}{\beta c_2}
z^{\alpha_1+\beta-1}(x-z)^{\alpha_2}
\exp\bigl\{-c_1z^\beta-c_2(x-z)^\beta\bigr\}\,\dd z.
\end{align}
Using the substitution $z=\theta_1x+y\kappa/x^{\beta-1}$, we have $x-z= \theta_2x-y\kappa/x^{\beta-1}$,
\begin{align}\label{9.11b}
c_1z^\beta+c_2(x-z)^\beta \ =\ c_1\theta_1^\beta x^\beta
\bigl(1+h_1(x,y)\bigl)^\beta+c_2\theta_2^\beta x^\beta
\bigl(1-h_2(x,y)\bigl)^\beta
\end{align}
where $h_i(x,y)=y\kappa/\theta_ix^\beta$.
Taylor expanding  $\bigl(1\pm h_i(x,y)\bigl)^\beta$ as in Lemma~\ref{Lemma:10.11a}
and using \eqref{11.11b}, the first order term of \eqref{9.11b} is
\[ c_1\theta_1^\beta x^\beta +c_2\theta_2^\beta x^\beta + \beta c_1\theta_1^{\beta-1}\kappa - \beta c_2\theta_2^{\beta-1}\kappa =c x^\beta. \]
Defining
$\omega_1(x,y)=\omega\bigl(h_1(x,y)\bigr)$, $\omega_2(x,y)=\omega\bigl(-h_2(x,y)\bigr)$, \eqref{11.11a} becomes
 \begin{multline}\nonumber
\frac{d_1d_2}{\beta c_2} \int_{y_-(x)}^{y_+(x)}
\bigl(\theta_1x+e_1(\theta_1x)y\bigr)^{\alpha_1+\beta-1}\bigl(\theta_2x-e_2(\theta_2x)y\bigr)^{\alpha_2}
\\ \cdot\ \exp\Bigl\{-cx^\beta
-\frac{y^2}{2\sigma_1^2x^\beta}\omega_1(x,y)-\frac{y^2}{2\sigma_2^2x^\beta}\omega_2(x,y)
\Bigr\}\,\frac{\kappa}{x^{\beta-1}}\dd y
\end{multline}
where $y_-(x)=(a_--\theta_1)x^\beta/\kappa$, $y_+(x)=(a_+-\theta_1)x/e(\theta_1 x)$.
Notice here that $a_-x<z<a_+x$ ensures the bound
\[ h_1(x,y)=\frac{1}{\theta_1x}(z-\theta_1x)\ge
\frac{a_-}{\theta_1}-1\ > \ -1.
\]
Similarly, $-h_2(x,y)\ge -a_+/\theta_2-1>1$. Using Lemmas~\ref{Lemma:9.11a} and \ref{Lemma:10.11a} shows that
so that the $\omega_i(x,y)$ are uniformly bounded below, and that
$\bigl(\theta_ix+e_i(\theta_ix)y\bigr)/x$ is bounded in $y_-(x)<y<y_+(x)$ and goes to $\theta_i$ as $x\to\infty$. A dominated convergence argument gives therefore that the asymptotics of \eqref{9.11a} is the same as that of
\begin{align*}& \frac{d_1d_2\kappa }{\beta c_2}\theta_1^{\alpha_1+\beta-1}\theta_2^{\alpha_2}x^{\alpha_1+\alpha_2}\e^{-cx^\beta}
 \int_{-\infty}^\infty \exp\Bigl\{-\frac{y^2}{2\sigma^2x^\beta}\Bigr\}\,\dd y\\ =\ &
 \frac{d_1d_2\kappa\eta^{1-\beta} }{\beta }\theta_1^{\alpha_1}\theta_2^{\alpha_2}x^{\alpha_1+\alpha_2}\e^{-cx^\beta}(2\pii \sigma^2 x^{\beta})^{1/2}\ =\
  k x^{\gamma} \e^{-cx^\beta} \,.
\end{align*}
This proves the assertion on the tail of $X_1+X_2$, and the proof of the density claim differs only by constants.
\end{proof}

\begin{corollary} \label{SACor4.2}
Assume the density $f$ of $F$ satisfies $f(x)\,\sim\, dx^{\alpha+\beta-1}\e^{-c x^\beta}$ as $x\to\infty$. Then the tail and the density of an i.i.d.\ sum satisfy
\begin{align}\label{SA4.2c}
\overline{F^{*n}}(x)\ =\ \Prob(S_n>x)\ &\sim \ k(n) x^{\alpha(n)}\e^{-c(n) x^\beta}\,,
\\ \label{SA4.2d}  f^{*n}(x)\ &\sim\ \beta c(n) k(n) x^{\alpha(n)+\beta-1}\e^{-c(n) x^\beta}
\end{align}
where $c(n)=c/n^{\beta-1}$, $\alpha(n) = n\alpha+(n-1)\beta/2$ and
\begin{align}\label{SA4.2g}
k(n) = \frac{d^n}{\beta c} \Big[ \frac{2\pi}{\beta(\beta-1) c} \Big]^{(n-1)/2} n^{\frac{1}{2} (\beta -n (2 \alpha +\beta )-1)}.
\end{align}
\end{corollary}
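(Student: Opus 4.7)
The plan is to prove Corollary~\ref{SACor4.2} by induction on $n$, with Theorem~\ref{Th:9.11a} serving as the driver at each step. The base case $n=1$ is immediate: the density hypothesis together with \eqref{9.11d} gives $\Fb(x)\sim (d/\beta c)\,x^{\alpha}\e^{-cx^\beta}$, matching \eqref{SA4.2c} with $c(1)=c$, $\alpha(1)=\alpha$, $k(1)=d/(\beta c)$ (when $n=1$ both the Gaussian factor $[2\pi/(\beta(\beta-1)c)]^{(n-1)/2}$ and the closing power of $n$ in \eqref{SA4.2g} collapse to $1$).

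For the inductive step, assume \eqref{SA4.2c}--\eqref{SA4.2d} at $n$, write $S_{n+1}=S_n+X_{n+1}$, and apply Theorem~\ref{Th:9.11a} to the pair $F_1=F^{*n}$ and $F_2=F$. The inductive density statement \eqref{SA4.2d} verifies the premise \eqref{SA4.2a} with $d_1=\beta c(n)k(n)$, $\alpha_1=\alpha(n)$, $c_1=c(n)=c/n^{\beta-1}$ for $F_1$, and $d_2=d$, $\alpha_2=\alpha$, $c_2=c$ for $F_2$. The tail conclusion of Theorem~\ref{Th:9.11a} then yields \eqref{SA4.2c} at level $n+1$, while the companion density statement in Theorem~\ref{Th:9.11a} simultaneously supplies \eqref{SA4.2d} at level $n+1$, so the induction is self-sustaining.

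The easy identities to check are those governing the exponent and the power. From \eqref{SA4.2aa} one computes $\theta_1=n/(n+1)$, $\theta_2=1/(n+1)$ and $\eta=c^{1/(\beta-1)}(n+1)/n$; hence by Lemma~\ref{Lemma:9.11a} the new exponential constant is $c_1\theta_1^\beta+c_2\theta_2^\beta=c_1c_2/\eta^{\beta-1}=c/(n+1)^{\beta-1}=c(n+1)$, and the new power is $\alpha_1+\alpha_2+\beta/2=\alpha(n)+\alpha+\beta/2=(n+1)\alpha+n\beta/2=\alpha(n+1)$.

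The main obstacle is the bookkeeping for the multiplicative prefactor. One must insert the above values into
\[ k=\frac{d_1d_2\,\theta_1^{\alpha_1}\theta_2^{\alpha_2}\,\kappa\,\eta^{1-\beta}\,(2\pi\sigma^2)^{1/2}}{\beta}, \]
with $\kappa=\eta^{\beta-1}/(\beta c_1c_2)$ and $1/\sigma^2=\beta(\beta-1)\kappa^2(c_1\theta_1^{\beta-2}+c_2\theta_2^{\beta-2})$, and show the result agrees with $k(n+1)$ as defined by \eqref{SA4.2g}. The useful intermediate simplifications are $\kappa\eta^{1-\beta}=n^{\beta-1}/(\beta c^2)$ and $\sigma^2=\beta c\,n/[(\beta-1)(n+1)^{\beta+1}]$; after the inductive expression for $k(n)$ is substituted, all explicit powers of $n$ conspire to cancel (this is the arithmetic checkpoint most likely to hide a slip), leaving exactly $d^{n+1}/(\beta c)\cdot[2\pi/(\beta(\beta-1)c)]^{n/2}(n+1)^{(\beta-(n+1)(2\alpha+\beta)-1)/2}$, as required.
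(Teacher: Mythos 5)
Your proof is correct and follows essentially the same route as the paper: induction on $n$ with Theorem~\ref{Th:9.11a} supplying the inductive step, and direct verification of the recursions for $c(n)$, $\alpha(n)$, $k(n)$. The only (cosmetic) difference is that you take $F_1=F^{*n}$, $F_2=F$ and step from $n$ to $n+1$, whereas the paper takes $F_1=F$, $F_2=F^{*(n-1)}$ and steps from $n-1$ to $n$; since $k$, $c$, $\eta$, $\kappa$, $\sigma^2$ in Theorem~\ref{Th:9.11a} are symmetric under swapping the indices $1\leftrightarrow 2$, the two are equivalent, and your intermediate quantities $\theta_1=n/(n+1)$, $\theta_2=1/(n+1)$, $\kappa\eta^{1-\beta}=n^{\beta-1}/(\beta c^2)$ and $\sigma^2=\beta c n/[(\beta-1)(n+1)^{\beta+1}]$ all check out, with the powers of $n$ cancelling exactly as you anticipate.
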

\begin{proof}
We use induction. The statement is trivial for $n=1$ so assume it proved for $n-1$. 
Taking $F_1=F$, $F_2=F^{*(n-1)}$ and applying Theorem~\ref{Th:9.11a} implies the result, and provides
recurrences for $c(n)$, $\alpha(n)$, and $k(n)$. To be specific, say that the $F_i$  distributions have densities $f_i$ like
\[ f_i(x) \sim d_i(n) x^{\alpha_i(n)+\beta-1} \e^{-c_i(n) x^\beta} \,, \quad i=1,2. \]
As $F_1=F$ is fixed, we simply have $c_1(n) = c$, $d_1(n) = d$, $\alpha_1(n) = \alpha$, and for $F_2=F^{*(n-1)}$ the induction hypothesis gives us
\[ c_2(n) = \frac{c}{(n-1)^{\beta-1}}, \quad
 \quad d_2(n) = \beta c_2(n-1) k(n-1), \quad \alpha_2(n) = \alpha(n-1) . \]

We extend the notation of Theorem~\ref{Th:9.11a} in the obvious way, for example we define $\eta(n)= c_1(n)^{1/(\beta-1)} + c_2(n)^{1/(\beta-1)}$. These simplify to
\[ \eta(n) = \frac{n c^{1/(\beta-1)}}{n-1}, \quad \theta_1(n) = \frac{1}{n}, \quad \theta_2(n) = \frac{n-1}{n}, \quad \kappa(n) = \frac{n^{\beta-1}}{\beta c} \,. \]
So $c(n) = c_1(n) \theta_1(n)^\beta + c_2(n) \theta_2(n)^\beta = c/n^\beta + c (n-1)/n^\beta = c/n^{\beta-1}$. Also, we have
$\alpha(n) = \alpha_1(n) + \alpha_2(n) + \beta/2  = n \alpha + (n-1) \beta / 2$. 

The last recursion is less simple. We need the $\sigma$ constants:
\[ \sigma_1^2(n) = \frac{\beta c n^{-\beta}}{\beta-1}, \quad \sigma_2^2(n) = \frac{\beta c (n-1) n^{-\beta}}{\beta-1} ,\quad \sigma^2(n) = \frac{\beta c (n-1) n^{-\beta-1}}{\beta-1} . \]
Setting $k(1) = d/(\beta c)$, we get for $n\ge 2$
\begin{align*}
k(n) &= d_1(n) d_2(n) \theta_1(n)^{\alpha_1(n)} \theta_2(n)^{\alpha_2(n)} \kappa(n) \eta(n)^{1-\beta} (2\pii \sigma(n)^2)^{1/2} / \beta \\
&= \Big[ \frac{2\pi}{\beta(\beta-1) c} \Big]^{1/2} d (n-1)^{\alpha  (n-1)+\frac{1}{2} (\beta  (n-2)+1)} n^{-\alpha n-\frac{1}{2} \beta  (n-1) -\frac{1}{2}} k(n-1) \\
&= \frac{d^n}{\beta c} \Big[ \frac{2\pi}{\beta(\beta-1) c} \Big]^{(n-1)/2} \prod_{\ell=2}^{n} (\ell-1)^{\alpha  (\ell-1)+\frac{1}{2} (\beta  (\ell-2)+1)} \ell^{-\alpha \ell-\frac{1}{2} \beta  (\ell-1) -\frac{1}{2}} \\
&= \frac{d^n}{\beta c} \Big[ \frac{2\pi}{\beta(\beta-1) c} \Big]^{(n-1)/2} n^{\frac{1}{2} (\beta -n (2 \alpha +\beta )-1)}.
\end{align*}

\end{proof}

Note that \eqref{SA4.2d} is already in Rootz\'en~\cite{Rootzen} (see his equations 
(6.1)--(6.2)). We point out later that the assumptions on the density can be relaxed
to $\Fb(x)\sim kx^{\alpha}\e^{-c x^\beta}$ where $k=d/c\beta$.

\section{Light-tailed sums}\label{S:BKR}

We now proceed to the set-up of BKR and first
introduce some terminology related to the densities of the form  $f(x)\sim \gamma(x)\e^{-\psi(x)}$.  The main assumption is that the function $\psi$ is non-negative, convex,
$C^2$,  and its first order derivative is denoted $\lambda$. Further it is supposed that
\bqn{\label{lambdaI}
	\lim_{x\to \infty}\lambda(x)=\infty,
}
$\lambda'$ is ultimately positive and $1/\sqrt{\lambda'}$ is self-neglecting, i.e.\ that for $x\to\infty$
\begin{equation}\label{17.12c}
\lambda'\bigl(x+y/\sqrt{\lambda'(x)}\bigr)\ \sim\ \lambda'(x).
\end{equation}
A function $\gamma$ is called \emph{flat} for $\psi$ if locally uniformly on bounded $y$-intervals
\begin{equation}\label{17.12d}
\lim_{x\to \infty}\frac{ \gamma\bigl(x+y/\sqrt{\lambda'(x)}\bigr)}{\gamma(x)}=1.
\end{equation}
Similar conventions apply to functions denoted $\psi_1,\psi_2,$ etc.
For the Weibull case,
\[ \psi(x)=ax^\beta,\ \ \lambda(x)=a\beta x^{\beta-1},\ \ \gamma(x) = \lambda(x) \]
and so \eqref{17.12c} and \eqref{17.12d} are satisfied.
Examples beyond Weibull-like distributions are $\psi(x)=x\log x$ and $\psi(x)=\e^{ax},a>0$. 

Define the class $\cH(\gamma,\psi)$ as the class of all distributions $F$ having a density
of the form $\gamma(x)\e^{-\psi(x)}$ where $\psi$ is as above and $\gamma$ a measurable function which is flat for $\psi$,
and let  $\cHb(\gamma,\psi)$ be the class of  distributions $F$ satisfying 
$\Fb(x)$ $\sim$ $\gamma(x)\e^{-\psi(x)}/\lambda(x)$.

\begin{theorem}\label{Th:19.12a}
{\rm (i)} $\cH(\gamma,\psi)\,\subseteq\,\cHb(\gamma,\psi)$;\\
 {\rm (ii)} Assume $F_1\in \cH(\gamma_1,\psi_1)$, $F_2\in \cH(\gamma_2,\psi_2)$. Then
 $F_1*F_2\in\cH(\gamma,\psi)$,
 where $\gamma,\psi$ are determined by first solving
 \begin{equation}\label{19.12a} q_1+q_2=x,\quad \lambda_1(q_1)=\lambda_2(q_2) \end{equation}
 for $q_1=q_1(x)$, $q_2=q_2(x)$ and next letting $\psi(x)=\psi_1(q_1)+\psi_2(q_2)$,
 \[\gamma(x)\ =\ \sqrt{ \frac{2\pi \lambda'(x)}{\lambda_1'(q_1) \lambda_2'(q_2)}}
\gamma_1(q_1)\gamma_2(q_2)
 \]
 where $\lambda(x)=\psi'(x)=\lambda_1(q_1)=\lambda_2(q_2)$.\\
  {\rm (iii)} Assume $F_1\in \cHb(\gamma_1,\psi_1)$, $F_2\in \cHb(\gamma_2,\psi_2)$. Then
	there exists $H_i \in  \cH(\gamma_i,\psi_i), H_i \in \GMDA(1/\lambda_i)$ and
	$$\overline H_i(x) \sim \overline F_i(x), \quad \overline{H_1*H_2}(x) \sim
	\overline{F_1*F_2}(x)  .$$
Moreover,   $F_1*F_2\in\cHb(\gamma,\psi)$ with $\gamma,\psi$ as in {\rm (ii)} and
$F_1*F_2\in \GMDA(1/\lambda)$.
\end{theorem}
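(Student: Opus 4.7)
The three parts of the theorem are all variations on Laplace's method, so the overall strategy is to push the heuristics of Section~\ref{S:Heur} through with the BKR conditions replacing the explicit Weibull form.

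For (i), I would start from $\Fb(x)=\int_x^\infty \gamma(t)\e^{-\psi(t)}\,\dd t$ and substitute $t=x+u/\lambda(x)$. A first-order Taylor expansion gives $\psi(x+u/\lambda(x)) = \psi(x)+u+ r(x,u)$ with $r(x,u)=O\!\bigl(u^2\lambda'(x)/\lambda(x)^2\bigr)$. The self-neglecting of $1/\sqrt{\lambda'}$ together with $\lambda(x)\to\infty$ forces $\lambda'(x)/\lambda(x)^2\to 0$ (equivalently, the von~Mises condition for $\GMDA(1/\lambda)$), so $r(x,u)\to 0$ for each $u$. Flatness of $\gamma$ on scale $1/\sqrt{\lambda'(x)}$ is coarser than on scale $1/\lambda(x)$, hence $\gamma(x+u/\lambda(x))/\gamma(x)\to 1$ pointwise. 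A dominated convergence bound (obtained from convexity of $\psi$, which gives $\psi(t)-\psi(x)\geq \lambda(x)(t-x)$ globally) then yields $\Fb(x)\sim \gamma(x)\e^{-\psi(x)}\int_0^\infty\e^{-u}\,\dd u/\lambda(x)$, which is the claim. The same argument applied to $\Fb(x+y/\lambda(x))/\Fb(x)$ shows $F\in\GMDA(1/\lambda)$ en route.

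For (ii), I would apply Laplace's method to $(f_1*f_2)(x)=\int \gamma_1(z)\gamma_2(x-z)\e^{-\psi_1(z)-\psi_2(x-z)}\,\dd z$. The exponent is strictly convex in $z$ and is minimised at $z=q_1(x)$ characterised by $\lambda_1(q_1)=\lambda_2(q_2)$ with $q_2=x-q_1$; the implicit function theorem (applicable because $\lambda_i'$ is ultimately positive) produces $C^2$ functions $q_1,q_2$. A second-order Taylor expansion around $q_1$ gives curvature $\lambda_1'(q_1)+\lambda_2'(q_2)$, so substituting $z=q_1+s/\sqrt{\lambda_1'(q_1)+\lambda_2'(q_2)}$, using the flatness of $\gamma_1,\gamma_2$ (transferred onto the required finer scale via self-neglect of the $1/\sqrt{\lambda_i'}$) and dominated convergence with the convex envelope bound, yields
\[
(f_1*f_2)(x)\ \sim\ \gamma_1(q_1)\gamma_2(q_2)\e^{-\psi_1(q_1)-\psi_2(q_2)}\sqrt{\frac{2\pi}{\lambda_1'(q_1)+\lambda_2'(q_2)}}.
\]
Implicit differentiation of $q_1+q_2=x$ and $\lambda_1(q_1)=\lambda_2(q_2)$ gives $\lambda'(x)=\lambda_1'(q_1)\lambda_2'(q_2)/(\lambda_1'(q_1)+\lambda_2'(q_2))$, which turns the square root into the prescribed $\gamma(x)$. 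It then remains to check that the resulting $\psi=\psi_1\circ q_1+\psi_2\circ q_2$ is convex, $C^2$, has $\lambda\to\infty$ and self-neglecting $1/\sqrt{\lambda'}$, and that $\gamma$ is flat for $\psi$; each of these is a bookkeeping computation on $q_1,q_2$ using that (say) $q_1'=\lambda_2'(q_2)/(\lambda_1'(q_1)+\lambda_2'(q_2))\in(0,1)$ stays bounded away from $0,1$ on any compact shift of $x$.

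For (iii), I would first exhibit the approximants $H_i$. Choose $x_0$ so large that $\gamma_i\e^{-\psi_i}$ has finite integral on $[x_0,\infty)$, take this as the density of $H_i$ on $(x_0,\infty)$, and fill in arbitrary mass on $(-\infty,x_0]$ to make $H_i$ a probability measure; then $H_i\in\cH(\gamma_i,\psi_i)$ and part (i) gives $\overline{H_i}\sim\gamma_i\e^{-\psi_i}/\lambda_i\sim\overline{F_i}$, while the membership $H_i\in\GMDA(1/\lambda_i)$ is part of the argument in~(i). It then suffices to show $\overline{F_1*F_2}(x)\sim\overline{H_1*H_2}(x)$, because (ii) applied to $H_1,H_2$ combined with (i) immediately identifies the asymptotic form, yields $F_1*F_2\in\cHb(\gamma,\psi)$, and gives $F_1*F_2\in\GMDA(1/\lambda)$. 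I would establish this tail transfer by writing $\overline{F_1*F_2}(x)=\int \overline{F_2}(x-z)\,F_1(\dd z)$, splitting the integration domain into the saddlepoint window $|z-q_1(x)|\le M/\sqrt{\lambda'(x)}$ and its complement. On the window, $\overline{F_2}(x-z)\sim\overline{H_2}(x-z)$ uniformly (because both are Gumbel-like at the same local scale), and replacing $F_1$ by $H_1$ introduces a negligible error after integration by parts against the uniform tail equivalence; outside the window, convexity of $\psi_i$ yields exponential tail bounds that match between $F_i$ and $H_i$ and, as $M\to\infty$, go to zero relative to the saddlepoint contribution, exactly as in the proof of Theorem~\ref{Th:9.11a}.

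The main obstacle, by a clear margin, is the uniformity needed in the tail-transfer step of~(iii): outside the subexponential world one cannot appeal to a standard ``$\Fb_i\sim\overline{H_i}\Rightarrow\overline{F_1*F_2}\sim\overline{H_1*H_2}$'' theorem, and the argument has to be carried out by hand on the $1/\sqrt{\lambda'(x)}$-scale dictated by the saddlepoint. Getting a single dominating function that is integrable and valid for both $F_i$ and $H_i$ simultaneously — so that the splitting can be made rigorous — is where the BKR flatness and self-neglecting hypotheses really have to be pushed to the limit.
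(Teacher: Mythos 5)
Your outline reaches the right conclusions and relies on the same two underlying estimates as the paper, namely $\lambda'/\lambda^2\to0$ and $\gamma'/\bigl(\sqrt{\lambda'}\,\gamma\bigr)\to0$, but your route differs at two points. For (i) you run Laplace's method directly on $\Fb(x)=\int_x^\infty\gamma\e^{-\psi}$ via the substitution $t=x+u/\lambda(x)$ and close with dominated convergence from the convexity bound $\psi(t)-\psi(x)\ge\lambda(x)(t-x)$; the paper instead differentiates the candidate $\overline H(x)=\gamma(x)\e^{-\psi(x)}/\lambda(x)$ and compares to $f$ (or, in a second proof, uses one integration by parts exhibiting the remainder as $\oh(\Fb)$), which is shorter because it never needs a dominating function. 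For (ii) you supply the full saddlepoint computation, including the identity $\lambda'(x)=\lambda_1'(q_1)\lambda_2'(q_2)/\bigl(\lambda_1'(q_1)+\lambda_2'(q_2)\bigr)$ that turns the Gaussian normalization into the stated $\gamma$; the paper simply defers to BKR Theorem~1.1 and only verifies the derivative relation \eqref{remm}. Both of these are legitimate alternatives.

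The substantive divergence is in (iii), and it is the one place where your proposal has a real gap. You correctly identify the bottleneck as the tail-transfer step $\overline{F_1*F_2}\sim\overline{H_1*H_2}$ given only $\overline{F_i}\sim\overline{H_i}$, and you propose to prove it by a hand-built window argument on the $1/\sqrt{\lambda'(x)}$ scale — but you leave that step as a sketch and flag it yourself as the hard part. The paper does not carry out any such argument: it constructs the $H_i$ (taking a smooth $\gamma_i^*$ via BKR's Proposition~3.2), checks that $H_i\in\GMDA(e_i)$ and that $\overline{H_i}(x+c)/\overline{H_i}(x)\to0$ for every $c>0$ (immediate from $\lambda_i\to\infty$), and then invokes an off-the-shelf tail-transfer theorem, Corollary~1 of Embrechts, Hashorva and Mikosch \cite{EHM}, which delivers $\overline{H_1*H_2}\sim\overline{F_1*F_2}$ outright. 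With that citation, (ii) applied to $H_1,H_2$ together with (i) finishes the proof. Your plan would in effect re-derive a special case of that corollary, and while not wrong in spirit, as written it stops exactly where the work begins; knowing the \cite{EHM} result exists is the missing ingredient that makes all of (iii) routine.
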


The proof of Theorem~\ref{Th:19.12a} is in Appendix~\ref{S:ProofofTheorem}.
Part (ii) is in BKR, here slightly reformulated, and
a number of examples in BKR can be obtained as corollaries of this theorem.

\begin{commentaryRemark}\label{Rem:19.12a}
Letting $\tau(y)=\lambda_1^{{\ginv}}(y)+\lambda_2^{{\ginv}}(y)$, the solution of \eqref{19.12a} can be written
\begin{equation}\label{19.12aa} q_1(x)\,=\,\lambda_1^{{\ginv}}\bigl(\tau^{{\ginv}}(x)\bigr) \,,\quad
q_2(x)\,=\,\lambda_2^{{\ginv}}\bigl(\tau^{{\ginv}}(x)\bigr)\end{equation}
(here $\cdot^{{\ginv}}$ means functional inverse).
\halmoss\end{commentaryRemark}

\section{Bounds}\label{S:Bounds}

There are easy upper- and lower-tail bounds for Weibull sums in terms of
the incomplete gamma function $\Gamma(\alpha,x)=$
$\int_x^\infty t^{\alpha-1} \e^{-t}\,\dd t$ when $\beta> 1$ that in their simplest form just come from thinking
about $p$-norms  $\|\bfy\|_p= $ $\bigl(|y_1|^p+\cdots+|y_n|^p\bigr)^{1/p}$ and the  fact that if $Y$ is standard exponential,
then $Y^{1/\beta }$ is Weibull with tail $\e^{-x^\beta}$.

\begin{proposition}\label{Prop:24.1a} Let $X$ have density $\beta k^{\gamma/\beta} x^{\gamma-1}\e^{-kx^\beta}
/\Gamma(\gamma/\beta)$, $x>0$, where $k>0$, $\beta\geq 1$, and $\gamma>0$. Then
\[ \frac{\Gamma(n\gamma/\beta,kx^\beta)}{\Gamma(n\gamma/\beta)}
\ \le\ \Prob(X_1 + \cdots + X_n > x)\ \le \
\frac{\Gamma(n\gamma/\beta,kx^\beta/n^{\beta-1})}{\Gamma(n\gamma/\beta)}\,.\]
\end{proposition}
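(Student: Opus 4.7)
The proof plan is to reduce the sum of Weibull-like variables to a sum of gammas via a power change of variables, then apply the standard $p$-norm / power-mean inequalities suggested in the passage preceding the proposition.

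First I would substitute $Y_i = k X_i^\beta$. A direct change-of-variables calculation on the given density of $X_i$ shows that each $Y_i$ has the standard Gamma density $y^{n\gamma/\beta - 1}\e^{-y}/\Gamma(\gamma/\beta)$ on $(0,\infty)$ with unit rate and shape $\gamma/\beta$. Since the $Y_i$ are independent, $T := Y_1+\cdots+Y_n$ is Gamma-distributed with unit rate and shape $n\gamma/\beta$, so $\Prob(T>t) = \Gamma(n\gamma/\beta,t)/\Gamma(n\gamma/\beta)$. In the new coordinates,
\[ S_n = X_1+\cdots+X_n = k^{-1/\beta}\bigl(Y_1^{1/\beta}+\cdots+Y_n^{1/\beta}\bigr) = k^{-1/\beta}\|\bfY\|_1^{(\beta)}, \]
where I write $\|\bfY\|_1^{(\beta)}$ just as shorthand for $\sum Y_i^{1/\beta}$.

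Next, I would invoke the elementary $p$-norm inequality: for nonnegative $y_1,\ldots,y_n$ and $\beta\ge 1$,
\[ \|\bfy\|_\beta \ \le\ \|\bfy\|_1\ \le\ n^{1-1/\beta}\|\bfy\|_\beta. \]
Applied with $y_i = Y_i^{1/\beta}$, so that $\|\bfy\|_\beta = T^{1/\beta}$ and $\|\bfy\|_1 = \sum Y_i^{1/\beta}$, this sandwiches the sum:
\[ T^{1/\beta}\ \le\ Y_1^{1/\beta}+\cdots+Y_n^{1/\beta}\ \le\ n^{1-1/\beta}\,T^{1/\beta}. \]

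Finally I would translate each side of the sandwich into a tail bound for $S_n$. For the lower bound, the event $\{T^{1/\beta} > k^{1/\beta}x\}$ is contained in $\{S_n>x\}$, so
\[ \Prob(S_n>x)\ \ge\ \Prob(T>kx^\beta)\ =\ \frac{\Gamma(n\gamma/\beta,\,kx^\beta)}{\Gamma(n\gamma/\beta)}. \]
For the upper bound, $\{S_n>x\}$ is contained in $\{n^{1-1/\beta}T^{1/\beta}>k^{1/\beta}x\}$, giving
\[ \Prob(S_n>x)\ \le\ \Prob\bigl(T>kx^\beta/n^{\beta-1}\bigr)\ =\ \frac{\Gamma(n\gamma/\beta,\,kx^\beta/n^{\beta-1})}{\Gamma(n\gamma/\beta)}. \]
These are exactly the claimed bounds. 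There is no real obstacle here: the only thing that needs checking is the Jacobian in the change of variables to confirm that the shape parameter is indeed $\gamma/\beta$ (this is what forces $n\gamma/\beta$ to appear in the incomplete gamma on both sides), and that $\beta\ge 1$ is exactly the regime in which the $p$-norm inequalities run in the direction that yields a matching upper and lower bound.
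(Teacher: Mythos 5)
Your proof is correct and follows essentially the same route as the paper's: both reduce to the observation that $kX_i^\beta$ is $\mathsf{Gamma}(\gamma/\beta,1)$, use the elementary $p$-norm comparison $\|\bfy\|_\beta\le\|\bfy\|_1\le n^{1-1/\beta}\|\bfy\|_\beta$, and read off the tail of the resulting $\mathsf{Gamma}(n\gamma/\beta,1)$ sum. (There is a small typo where you write the density of each $Y_i$ as $y^{n\gamma/\beta-1}\e^{-y}/\Gamma(\gamma/\beta)$; the exponent should of course be $\gamma/\beta-1$, as your normalization constant and the rest of your argument correctly reflect.)
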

\begin{proof}
An $X$ with the given density has the same distribution as $(Y/k)^{1/\beta}$
where $Y$ is $\mathsf{Gamma}(\alpha,1)$ with density
$y^{\alpha-1}\e^{-y}/\Gamma(\alpha)$, where $\alpha=\gamma/\beta$.
Therefore
\[ X_1^\beta + \cdots + X_n^\beta\ =\ \|\bfX\|_\beta^\beta \ \stackrel{d}{=} \ \|\bfY/k\|_1=
Y_1/k+ \cdots + Y_n/k\,,\]
where $Y_1,\ldots,Y_n$ are i.i.d.\ $\mathsf{Gamma}(\alpha,1)$.
From the Jensen and H\"older inequalities we have for $p\ge 1$ and $\bfx \in \mathbb{R}^n$ that
$$ \|\bfx\|_p \le  \|\bfx\|_1 \le  \|\bfx\|_p  n^{1- 1/p}. $$
Hence, since further $ \|\bfY\|_1=Y_1+\cdots+Y_n$ is $\mathsf{Gamma}(n\alpha,1)$ with tail $\Gamma(n\alpha,y)/
\Gamma(n\alpha)$, one has for any $x>0$
\bqny{\Prob(X_1 + \cdots + X_n > x)\ &=&\ \Prob\bigl( \|\bfX\|_{1}>x\bigr)\\
	&\le&   \Prob\bigl( \|\bfX\|_\beta^\beta >x^\beta/n^{\beta-1}\bigr)=
	\Prob\bigl( \|\bfY\|_1>kx^\beta/n^{\beta-1}\bigr)\,,
}
and similarly for the lower bound.
\end{proof}
The (upper) incomplete gamma function $\Gamma(\alpha,x)$ appearing here is available in most
standard software, but note that an even simpler lower bound comes from
$\Gamma(\alpha,x)\ge x^{\alpha-1}\e^{-x}$ for $x>0$ when $\alpha = \gamma/\beta \geq 1$, resp.\
$\Gamma(\alpha,x)\ge x^{\alpha-1}\e^{-x} \times \big( x/(x+1-\alpha) \big)$
when $\alpha \in (0,1)$.
Moreover, observe that $X$ with the density given in Prop.~\ref{Prop:24.1a} has tail probability
\[
\overline{F}_X(x) = \Prob(X>x) = \frac{\Gamma(\gamma/\beta,k x^\beta)}{\Gamma(\gamma/\beta)}\,.
\]
Hence, appealing to the fact that $\Gamma(\alpha,x) \sim x^{\alpha-1} \e^{-x}$ as $x\to\infty$,
the upper bound in Prop.~\ref{Prop:24.1a} is asymptotically
\[
\frac{\Gamma(\gamma/\beta)^n}{\Gamma(n\gamma/\beta)}\,n^{n\gamma/\beta -1} \, k^{n-1}\,\left(\frac{x}{n}\right)^{\beta(n-1)} \overline{F}_X(x/n)^n\,.
\]

When $\gamma=\beta$ (the ordinary Weibull case), the ratio of this upper bound to the
true asymptotic form for $\Prob(X_1 + \cdots + X_n > x)$ is
\[
\frac{n^{(n-1/2)}}{(n-1)!}\,\Bigl[ \frac{(\beta-1)}{2\pi\beta}\Bigr]^{(n-1)/2} \,k^{(n-1)/2}\,\left(\frac{x}{n}\right)^{\beta(n-1)/2} \,,
\]
so the upper bound is out only by a polynomial factor in $x$, which indicates
it is close to the true probability on a logarithmic scale.
More precisely, writing $U(x)$ for the upper bound and $P(x)$ for the true probability,
it holds trivially that $x^{-1} \, \log(U(x)) \sim x^{-1} \, \log(P(x))$ as $x\to\infty$.

It is straightforward to extend Prop.~\ref{Prop:24.1a} to the following slightly more general form.
\begin{proposition}\label{Prop:24.1b} Let $\{X_i\}_{i=1}^n$ be independent random variables with
density $\beta k^{\gamma_i/\beta} x^{\gamma_i-1}\e^{-kx^\beta}
/\Gamma(\gamma_i/\beta)$, $x>0$, where $k>0$, $\beta\geq 1$, and $\gamma_i>0$, for $i=1,\dots,n$. Then with $\gamma_0=\sum_{i=1}^n \gamma_i$, it holds that
\[ \frac{\Gamma(\gamma_0/\beta,kx^\beta)}{\Gamma(\gamma_0/\beta)}
\ \le\ \Prob(X_1 + \cdots + X_n > x)\ \le \
\frac{\Gamma(\gamma_0/\beta,kx^\beta/n^{\beta-1})}{\Gamma(
\gamma_0/\beta)}\,.\]
\end{proposition}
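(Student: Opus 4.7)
The plan is to adapt the argument of Proposition~\ref{Prop:24.1a} essentially verbatim, using the fact that the only probabilistic input beyond the Jensen--Hölder norm comparison is the distribution of the $\beta$-th power sum, and the sum of independent Gamma random variables with a common scale parameter is again Gamma with shape equal to the sum of shapes, regardless of whether the individual shapes agree.

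Concretely, I would first represent $X_i \stackrel{d}{=}(Y_i/k)^{1/\beta}$ with independent $Y_i\sim\mathsf{Gamma}(\gamma_i/\beta,1)$, so that
\[
\|\bfX\|_\beta^\beta \;=\; X_1^\beta+\cdots+X_n^\beta \;\stackrel{d}{=}\; \frac{Y_1+\cdots+Y_n}{k} \;=\; \frac{\|\bfY\|_1}{k}.
\]
The key observation, replacing the i.i.d.\ step in Prop.~\ref{Prop:24.1a}, is that independent Gammas with a common scale have $Y_1+\cdots+Y_n\sim\mathsf{Gamma}(\gamma_0/\beta,1)$ where $\gamma_0=\sum_i\gamma_i$, whose tail is $\Gamma(\gamma_0/\beta,y)/\Gamma(\gamma_0/\beta)$.

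Then I would invoke the same two-sided norm inequality $\|\bfx\|_\beta\le\|\bfx\|_1\le n^{1-1/\beta}\|\bfx\|_\beta$ for $\beta\ge 1$, valid for arbitrary $\bfx\in\mathbb{R}^n$, to sandwich the event $\{\|\bfX\|_1>x\}$ between the events $\{\|\bfX\|_\beta^\beta>x^\beta/n^{\beta-1}\}$ and $\{\|\bfX\|_\beta^\beta>x^\beta\}$. Translating through the identity in distribution above yields
\[
\Prob\bigl(\|\bfY\|_1>kx^\beta\bigr)\;\le\;\Prob(X_1+\cdots+X_n>x)\;\le\;\Prob\bigl(\|\bfY\|_1>kx^\beta/n^{\beta-1}\bigr),
\]
which is precisely the stated two-sided bound after substituting the $\mathsf{Gamma}(\gamma_0/\beta,1)$ tail.

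Since the structure of the proof is unchanged, there is no real obstacle; the only point worth checking is that the representation $X_i\stackrel{d}{=}(Y_i/k)^{1/\beta}$ and the scale-matching needed for the convolution remain valid when the shape parameters $\gamma_i$ differ, which they do because the rate $k$ (equivalently, the scale of $Y_i$) is common across $i$. Independence of the $Y_i$ follows from independence of the $X_i$, preserving the Gamma convolution identity.
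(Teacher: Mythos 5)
Your proof is correct and is exactly the intended extension: the paper supplies no separate argument for Proposition~\ref{Prop:24.1b}, merely noting it is a straightforward generalization of Proposition~\ref{Prop:24.1a}, and you have correctly identified that the single new ingredient is that independent Gammas with a common scale but different shapes still convolve to a Gamma with shape $\gamma_0/\beta = \sum_i \gamma_i/\beta$, after which the Jensen--H\"older norm sandwich carries over verbatim.
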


\section{M.g.f.'s and the exponential family}\label{S:ExpFam}

In this section, we assume that $X \sim F$ has the tail asymptotics $\gamma(x)\e^{-x^\beta}/\lambda(x)$ for some $\beta>1$ where $\lambda(x)=\beta x^{\beta-1}$.
Define
\begin{align*}
\widehat F[\theta]\ &=\ \Exp [\e^{\theta X}]\ =\
\int_{-\infty}^\infty \e^{\theta z}\, F(\dd z)\,,\quad
F_\theta(\dd z)\ =\ \frac{\e^{\theta z}}{\widehat F[\theta]}F(\dd z)
\end{align*}
where expectations with respect to $F_\theta$ will be denoted $\Exp_\theta[\cdot]$.
Determining the asymptotics of $\widehat F[\theta]$ and characteristics of the exponential family
like their moments is easier when taking $\theta=\lambda(x)$. For a general $\theta$, one then
just have to substitute $x=\lambda^{\ginv}(\theta)$ in the following result.

\begin{proposition}\label{Prop:18.12a} As $x\to\infty$, it holds that
	\begin{align}\label{18.12a} \widehat  F\bigl[\lambda(x)\bigr]\ &\sim\ \sqrt{\frac{2\pii}{\lambda'(x)}}\gamma(x)\e^{(\beta-1)x^\beta},\\
	\label{18.12b}\Exp_{\lambda(x)}X\ &\sim\ x.
	\end{align}
	Further, we have the following convergence in $\Prob_{\lambda(x)}$-distribution as $x\to \infty$ 
	\begin{align} \label{18.13}
	\sqrt{\lambda'(x)}(X-x)=\sqrt{\beta(\beta-1) x^{\beta-2}}(X-x)\,\convdistr\,N\bigl(0,1\bigr) \,.
	\end{align}
\end{proposition}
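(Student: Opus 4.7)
\emph{Proof plan.} All three assertions flow from a single Laplace-method analysis of the tilted integral after rescaling around the saddlepoint $z=x$.

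First I would convert the moment generating function into a tail integral by integration by parts (justified since $\Fb$ decays super-exponentially): for $\theta>0$,
\[
\widehat F[\theta]\;=\;1 + \theta\int_0^\infty e^{\theta z}\Fb(z)\,dz \;-\; \theta\int_{-\infty}^0 e^{\theta z}F(z)\,dz,
\]
where the last term is $\Oh(1)$ as $\theta\to\infty$ and hence asymptotically negligible compared to the middle term. Substituting $\theta=\lambda(x)=\beta x^{\beta-1}$ together with the assumed asymptotic $\Fb(z)\sim\gamma(z)e^{-z^\beta}/\lambda(z)$ reduces the proof of \eqref{18.12a} to evaluating $\lambda(x)\int e^{h_x(z)}\gamma(z)/\lambda(z)\,dz$, where $h_x(z):=\lambda(x)z - z^\beta$.

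The function $h_x$ has its unique maximum at $z=x$, where $h_x(x)=(\beta-1)x^\beta$ and $h_x''(x)=-\lambda'(x)$. Substituting $z=x+t/\sqrt{\lambda'(x)}$ and Taylor expanding (using $\lambda'(x)=\beta(\beta-1)x^{\beta-2}$) gives
\[
h_x\bigl(x+t/\sqrt{\lambda'(x)}\bigr)\;=\;(\beta-1)x^\beta \,-\, \tfrac{1}{2}t^2 \,+\, \oh(1)
\]
locally uniformly in $t\in\RL$. The prefactor $\gamma(z)/\lambda(z)$ tends to $\gamma(x)/\lambda(x)$ by the flatness of $\gamma$ and the self-neglecting property \eqref{17.12c}. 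A dominated convergence argument produces the Gaussian integral $\int e^{-t^2/2}\,dt=\sqrt{2\pii}$ and, after the factor of $\lambda(x)$ cancels with $\lambda(x)$ in the denominator, yields \eqref{18.12a}.

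For the convergence in distribution \eqref{18.13}, the same change of variables applied to the tilted tail
\[
\Prob_{\lambda(x)}\!\bigl(X > x+y/\sqrt{\lambda'(x)}\bigr)\;=\;\widehat F[\lambda(x)]^{-1}\int_{x+y/\sqrt{\lambda'(x)}}^\infty e^{\lambda(x)z}F(dz),
\]
together with the same integration-by-parts step, identifies the limit as $\int_y^\infty e^{-t^2/2}\,dt/\sqrt{2\pii}$. Finally, for \eqref{18.12b} I would write $\Exp_{\lambda(x)}X = x + \Exp_{\lambda(x)}(X-x)$ and argue that the remainder is $\oh(x)$ from \eqref{18.13} combined with uniform integrability, which is furnished by an explicit Gaussian upper bound on the tilted density coming out of the same Laplace analysis.

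The main obstacle is to justify the dominated convergence step \emph{uniformly} in $t\in\RL$: pointwise convergence on bounded sets is free from the Taylor expansion and the flatness of $\gamma$, but the tails require a global quadratic lower bound based on the strict convexity of $\psi(z)=z^\beta$, together with the super-exponential decay of $\Fb$ outside a neighborhood of $z=x$ in order to discard the contribution from large $|t|$.
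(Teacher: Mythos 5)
Your plan takes essentially the same route as the paper's own proof: integration by parts converts the m.g.f.\ into a tail integral, the Laplace method with a rescaling of order $1/\sqrt{\lambda'(x)}$ around the saddlepoint $z=x$ produces the Gaussian integral giving \eqref{18.12a}, and the dominated-convergence/truncation step you correctly flag as the main obstacle is handled there exactly as you propose (split $[c_1 x,\infty)$ into a shrinking neighborhood of $x$ plus remainder, and discard the remainder using super-exponential decay). The one genuine deviation is \eqref{18.12b}: the paper applies the integration by parts to $g(z)=z^k e^{\lambda(x)z}$ for general $k\ge 0$, so the same Laplace computation yields $\Exp[X^k e^{\lambda(x)X}]\sim\sqrt{2\pii/\lambda'(x)}\,\gamma(x)x^k e^{(\beta-1)x^\beta}$, and \eqref{18.12b} falls out immediately by taking the ratio $k=1$ over $k=0$ (this also gives the derivative asymptotics \eqref{deriv_asymptotics} used later in Section~\ref{S:CompP}); you instead deduce \eqref{18.12b} from the CLT \eqref{18.13} plus uniform integrability, which is a valid alternative but requires an extra step — namely, a uniform-in-$x$ bound on $\Exp_{\lambda(x)}\lambda'(x)(X-x)^2$ or a Gaussian-dominating tilted density — that the paper's direct moment computation makes unnecessary.
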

\begin{proof}
	Suppose for simplicity that $X$ is non-negative.
	In view of Proposition 3.2 in BKR we can assume w.l.o.g. that $\gamma \in C^\infty$. 
	By Theorem \ref{Th:19.12a} we have that $\overline{F}(x) \sim \overline{H}(x)$ where $H$ has the density $\gamma(z) \e^{-z^\beta}$ for $z \ge 0$.
	It follows easily from our proof below that $\Exp[X^k \e^{\lambda(x) X}] \sim \Exp[X^k_* \e^{\lambda(x) X_*}]$ for $k \ge 0$ with $X_* \sim H$, so we assume w.l.o.g. that $F$ has the density $f(z)= \gamma(z) \e^{-z^\beta}$ for $z \ge 0$. 
	Lastly, we have that $\gamma(x)= \oh(\e^{cx})$ for any $c>0$, as
	\bqn{
	\label{2.12i}
	\lim_{x\to \infty}\frac{\gamma'(x)}{\sqrt{\lambda'(x)} \gamma(x)}\ &=\ 0.}

	For $g(z)= z^k \e^{\lambda(x) z}$, with $k\ge 0$, it follows by integration by parts that
	\bqn{
		\Exp[X^k \e^{\lambda(x) X}] &=& g(0) + \int_{0}^\infty g'(z) \overline{ F}(z) \dd z \notag \\
		&=& \indi\{k=0\} + \int_{0} ^{c_1x} g'(z) \overline{F}(z) \dd z + \int_{c_1x}^\infty g'(z) \overline{F}(z) \dd z \notag \\
		&=& \Oh(\e^{ \tilde c_1 x ^\beta}) + \int_{c_1x}^\infty \big[ k z^{k-1} + \lambda(x) z^k \big] \e^{\lambda(x)z} \frac{\gamma(z)}{\lambda(z)} \e^{-z^\beta} \dd z \label{int_by_parts}
	}
	for any $0< c_1< \tilde c_1< 1$ sufficiently small. 

	Consider integrals of the form $\int_{c_1x}^\infty z^k \e^{\lambda(x)z} \gamma(z) \e^{-z^\beta} \dd z$ and note that the global maximum of the exponent $\lambda(x)z- z^\beta$ is at $z=x$. 
	We use the substitution, similar to those in Sections~\ref{S:Heur} and \ref{S:Sums}, of $z=x+ y/\lambda(x)$ and note that
	\[\lambda(x) z-z^\beta\ \sim \ (\beta-1)x^\beta-\frac{y^2\lambda'(x)}{2\lambda(x)^2}.\]
	Therefore, for any $D > 0$ we have for $x \to \infty$
	\bqny{ 
		&& \int_{c_1x}^\infty z^k \frac{\gamma(z)}{\lambda(z)} \e^{\lambda(x) z- z^\beta} \dd z \
	  	\sim\ \int_{x- D/\lambda(x)}^{x+ D/\lambda(x)} z^k \frac{\gamma(z)}{\lambda(z)} \e^{\lambda(x) z- z^\beta} \dd z \\
		&\sim & \int_{-D}^D  \bigl(x+ \frac{y}{\lambda(x)}\bigr)^k
		\frac{\gamma\big(x+ \frac{y}{\lambda(x)}\bigr)}{\lambda\big(x+ \frac{y}{\lambda(x)}\bigr)} 
		\exp\Bigr\{ (\beta-1)x^\beta-\frac{y^2\lambda'(x)}{2\lambda(x)^2} \Bigr\}\,\frac{1}{\lambda(x)}  \dd y\\
		&\sim & x^k \frac{\gamma(x)}{\lambda(x)^2}\e^{(\beta-1)x^\beta}\int_{-D}^D
		\exp\Bigr\{ {-}\frac{y^2\lambda'(x)}{2\lambda(x)^2} \Bigr\}\,\dd y\
		\sim \ \sqrt{\frac{2 \pii}{\lambda'(x)}} x^k \frac{\gamma(x)}{\lambda(x)} \e^{(\beta-1)x^\beta}
	}
	where the replacement of the limits $\pm\, D$ by $\pm\,\infty$ follows from  $\lambda'(x)/\lambda(x)^2\to 0$.
	Combining this integral asymptotic with \eqref{int_by_parts}  we get 
	\begin{align}
	\Exp[X^k \e^{\lambda(x) X}]\ 
	&= \Oh(\e^{ \tilde c_1 x ^\beta}) 
	+ k \int_{c_1x}^\infty z^{k-1} \frac{\gamma(z)}{\lambda(z)} \e^{\lambda(x)z - z^\beta}  \dd z
	\\ &\qquad + \lambda(x) \int_{c_1x}^\infty z^k \frac{\gamma(z)}{\lambda(z)} \e^{\lambda(x)z - z^\beta}  \dd z \notag \\
	&=\ \Oh(\e^{ \tilde c_1 x ^\beta}) + 
	\sqrt{\frac{2 \pii}{\lambda'(x)}} \gamma(x) \e^{(\beta-1)x^\beta} \Big( x^k + \frac{k}{\lambda(x)} x^{k-1}  \Big), \label{derivs_of_mgf}
	\end{align}
	or to take only the largest term,
	\[
	 	\Exp[X^k \e^{\lambda(x) X}] \sim 
	 	\sqrt{\frac{2\pii}{\lambda'(x)} } \gamma(x) x^k \e^{(\beta-1)x^\beta} \quad \text{as } x \to \infty.
	 \]
	 From this \eqref{18.12a}--\eqref{18.12b} are easy.
	
	Next, we show the asymptotic normality. By the above arguments, we assume for simplicity  that $F$ has density $f(z)=\gamma(z) \e^{-z^\beta} $ for all $z>0$. Similarly,
	writing instead $z=x+y/\sqrt{\lambda'(x)}$, we have
	\[\lambda(x) z-z^\beta\ \sim \ (\beta-1)x^\beta-\frac{y^2}{2} .\]
	For some $D< \min(0,v)$ we obtain
	\bqny{
		\lefteqn{\int_{x+ D/\sqrt{\lambda'(x)}}^{x+ v/\sqrt{\lambda'(x)}}
			\gamma(z)\exp\Bigr\{ \lambda(x) z- z^\beta \Bigl\} \dd z}\\
		&\sim&  \frac{1}{\sqrt{\lambda'(x)}} \int_{D}^v
		\gamma\big(x+ y/\sqrt{\lambda'(x)}\bigr)\exp\Bigr\{ (\beta-1)x^\beta-\frac{y^2}{2} \Bigr\}\, \dd y\\
		&\sim&  \frac{1}{\sqrt{\lambda'(x)}} \gamma(x) \e^{(\beta-1)x^\beta}\int_{D}^{v }
		\exp\Bigr\{ {-}\frac{y^2}{2} \Bigr\}\,\dd y.
	}Hence, letting $D\to - \infty$ yields
	$$ 	\Exp[\e^{\lambda(x)X};\, \sqrt{\lambda'(x)}(X-x)\le v]
	\sim \sqrt{\frac{2\pii}{\lambda'(x)}} \gamma(x) \e^{(\beta-1)x^\beta}\Phi(v).
	$$
	Dividing by \eqref{18.12a} gives $\Prob_{\lambda(x)}\bigl(\sqrt{\lambda'(x)}(X-x)\le v\bigr)$ $\to$ $\Phi(v)$
	which is \eqref{18.13}.
\end{proof}

\begin{commentaryRemark}\label{Rem:Lo}
	Asymptotic normality for the general case $\overline{F}(x)= \e^{-\psi(x)}$ similar to the result of Proposition \ref{Prop:18.12a} is derived in \cite{BKR03}.
\halmoss\end{commentaryRemark}

\begin{commentaryRemark}\label{Rem:21.12a} The BKR method of proof is modelled after the standard
	proof of the saddlepoint approximation: exponential change of measure using estimates of the above type.
	One has
	\begin{equation}\label{21.12b}
	\Prob(S_n>x)\ \ =\ \widehat{F}[\theta]^n\Exp_\theta\bigl[\e^{-\theta S_n};\, S_n>x\bigr]
	\end{equation}
	and should take $\theta$ such that $\Exp_\theta S_n=x$, i.e.\ $\theta=\lambda(x/n)$.
	The approximately normality of $(X_1,\ldots,X_n)$ gives that $S_n$ is approximately normal
	$\bigl(x,n/\lambda'(x/n)\bigr)$. So, one can compute
	\[\Exp_{\lambda(x/n)}\exp\bigl\{-a\sqrt{\lambda'(x/n)/n}\,S_n\bigr\}\]
	for any fixed $a$
	but $\theta=\lambda(x/n)$ is of a different order than $\sqrt{\lambda'(x/n)/n}$. Therefore
	(as for the saddlepoint approximation) a sharper CLT is needed, and
	this is maybe the most demanding part of the BKR approach.	\halmoss\end{commentaryRemark}\

\section{Compound Poisson sums}\label{S:CompP}

 We  consider here $S_N=X_1+\cdots+X_N$ where
$N$ is Poisson$(\mu)$ and independent of $X_1,X_2,\ldots$, where $X_i \sim \mathsf{Weibull}(\beta)$. The asymptotics of
$\Prob(S_N>x)$ are important in many applications, for example actuarial sciences \cite{RP}, and can be investigated using classical saddlepoint techniques. The relevant asymptotic is the classical Esscher approximation :
\begin{align}
	\Prob(S_N > x) 
	&\sim \frac{
		\big( \widehat{F_{S_N}}[\theta] - \e^{-\mu} \big) \exp \{ -\theta x \}
	}{
		\theta \sigma_c(\theta)
	} B_0(\ell)  \label{jlj_first}  \,,
\end{align}
where $\theta$ is the solution to $\mu \widehat{F}'[\theta] = x$, and $\widehat{F_{S_N}}[\theta] = \exp \{ \mu (\widehat{F}[\theta] - 1 ) \}$, 
$B_0(l) = l \e^{l^2/2}(1 - \Phi(l)) \to (2\pi)^{-1/2}$, $\sigma_c^2(\theta)= \mu F''[\theta]$, and $\ell = \theta \sigma_c(\theta)$. See (7.1.10) in~\cite{JLJ},
where also further
refinements and variants are given. The issue with implementing \eqref{jlj_first} is that we do not usually have access to $\widehat{F}[\theta]$; note, Mathematica can derive $\widehat{F}[\theta]$ when $\beta =\ $1.5, 2, or 3.

For standard $\mathsf{Weibull}(\beta)$ variables, \eqref{18.12a} simplifies to
\begin{align*} 
\widehat{F}[t] \sim \sqrt{\frac{2 \pi \beta ^{\frac{1}{1-\beta }}}{\beta -1}}  t^{\frac{\beta}{2(\beta-1)}} \e^{(\beta -1) (t/\beta)^{\frac{\beta }{\beta -1}}} =: \widetilde{F}[t]\,.
\end{align*}
Unfortunately $\widehat{F_{S_N}}[t] \not\sim \exp\{\mu( \widetilde{F}[t] - 1)\}$, though $\widehat{F_{S_N}}[t] \approx_{\log} \exp\{\mu( \widetilde{F}[t] - 1)\}$, where the notation $h_1(x)\approx_{\log}h_2(x)$ means that $\log h_1(x)/\log h_2(x)\to 1$.

One can select the $\theta$ which solves $\mu \widetilde{F}'[\theta]=x$, however it seems this must be done numerically.
An alternative is the asymptotic forms for $\widehat{F}^{(k)}$ from \eqref{derivs_of_mgf}. Take
\begin{align} \label{deriv_asymptotics}
	\widehat{F}^{(k)}[\theta] = \Exp[X^k\e^{\theta X}]\sim y^k\widehat{F}[\theta], \quad \text{for } k \in \nat
\end{align}
where we've written $\theta=\lambda(y)$ as in Section~\ref{S:ExpFam}. Thus if we set $\theta$ as the solution to $\mu y \widetilde{F}[\lambda(y)] = x$ then we get
\begin{equation} \label{y_approx}
y = 2^{-1/\beta } \Bigg[ \frac{(\beta +2) }{(\beta -1) \beta } \mathcal{W}\Bigg( \frac{(\beta -1) \beta }{(\beta +2)} \left(\frac{2^{\frac{1}{\beta }+\frac{1}{2}} x}{c_1}\right)^{\frac{2 \beta }{\beta +2}} \Bigg) \Bigg]^{1/\beta } 
\end{equation}
where $\mathcal{W} $ is the Lambert W function and $c_1 = \mu \sqrt{2 \pi } \beta   / \sqrt{(\beta -1) \beta }$.

With this choice of $\theta$, we can say $\widehat{F}^{(k)}[\theta] \sim x y^{k-1}$, so $\sigma_c^2(\theta) \sim \mu x y$ and $\ell \sim \lambda(y) \sqrt{\mu x y}$, and substituting this into \eqref{jlj_first} gives us
\begin{equation} \label{asym}
\Prob(S_N > x) 
	\approx_{\log} \frac{
		\e^{-\mu} \big( \exp \{ \mu x / y \} - 1 \big) \exp \{ -\theta x \}
	}{
		\lambda(y) \sqrt{\mu x y}
	} B_0(\ell) \,.
\end{equation}

Preliminary numerical work indicates that \eqref{asym} is not particular accurate in the whole range of relevant parameters. 
The problem derives from the fact we only have log-asymptotics for $\widehat{F_{S_N}}[\theta]$; finding more accurate asymptotics is left for future work.

A further interesting extension could be the asymptotic form of $\Prob\bigl(Z(t)>x\bigr)$ where $Z$ is
a L\'evy process where the L\'evy measure has tail $\gamma(x)\e^{-\psi(x)}$.

\section{The exponential class of distributions}\label{C1C2}
For $F\in \GMDA(e)$ in the previous sections we have discussed the case that $e(x)=1/\lambda(x)$ with
	$$ \lim_{x\to \infty} e(x)= 0.$$
	If $\lim_{x\to \infty} e(x)=\infty$, then $F$ is long-tailed in the sense that
	$ \overline{F}(x- y) \sim \overline{F}(x)$ for any fixed $y$. Convolutions of distributions with long-tailed are well-understood. The intermediate case is that
	$$ \lim_{x\to \infty} e(x)= 1/\gamma, \quad \gamma>0.$$
	For such $F$ we have
	$$ 	\overline  F( x+ s) \sim \e^{-\gamma s} 	\overline  F(x), \quad x\to \infty$$
	for any $s\in \mathbb{R}$, which is also denoted as
	$F \in \mathcal{L}(\gamma)$.
	Note in passing that any distribution $F \in \GMDA(e)$ with upper endpoint infinity satisfies (see e.g. \cite[Prop.\,1.4]{Res})
	\bqn{\label{polin}
		\overline  F(x) \sim  	\overline  H(x)= C \exp\Bigl(- \int_0^x \frac{1}{u(t)} \dd t\Bigr), \quad x\to \infty}%
	for some $C>0$, where $u$ is absolutely continuous with respect to Lebesgue measure, with density $u'$ satisfying  $\lim_{x\to \infty} u'(x)=0$.  Such $H$ is commonly referred to as a von Mises distribution.
	
	It is well-known (\cite{Cline}, \cite{Wata}) that the class of distributions  $\mathcal{L}(\gamma)$ is closed under convolution.  In the particular case that the $X_i$ have tails
	\bqn{ \overline F_i(x) = \ell_i(x)  x^{\gamma_i-1} \e^{-k x^\beta}, \quad 1 \le i\le n,}%
	where $\ell_i$'s are  positive slowly varying functions and $\beta=1, \gamma_i>0, i\le n,k>0$ we have in view of
	Theorem 2.1 in \cite{EHJL} (see also Theorem 6.4 ii) in \cite{APakes})
	\bqn{ \label{jin}
		\pk{S_n> x} \sim  \frac{k^{n-1}}{\Gamma(\gamma_0)}
		x^{\gamma_0-1}\prod_{i=1}^n \ell_i(x) \e^{-k x^\beta }.
	}%
	where $\gamma_0=\sum_{i=1}^n \gamma_i$.
	If \eqref{jin} holds with $\beta>1$, then for non-negative $X_i$'s using the $\beta$-norm argument we have as in Section \ref{S:Bounds}
\bqn{ \label{asm}
	 \pk{S_n> x}  \le \pk{ X_1^{\beta}+ \cdots +X_n^{\beta} > x^\beta /n^{\beta-1}}
}
	for any $x>0$. Since $\mathbb{P}( X_1^{\beta}> x) \sim \ell_i(x^{1/\beta}) x^{(\gamma_i- 1)/\beta} \e^{- k x}$, then by \eqref{jin} and Theorem \ref{Th:19.12a}
	$$\ln \pk{S_n> x} \sim   \ln  \pk{ X_1^{\beta}+ \cdots +X_n^{\beta} > x^\beta /n^{\beta-1}}  \sim  k n (x/n)^{\beta}  $$
and thus the upper bound in \eqref{asm} is logarithmic asymptotically exact.

\section{Applications to Monte Carlo simulation}\label{S:CdMC}

In this section, we write $h_1(x)\approx_{\log}h_2(x)$ if $\log h_1(x)/\log h_2(x)\to 1$
and $ \le_{\log}$ if the $\limsup$ of the ratio of log's is at most 1, and we take the summands 
to have a density like $\gamma(x) \e^{-x^\beta}$ as $x \to \infty$.  

Algorithms for tails $\Prob(S_n > x)$ with large $x$ are one of the traditional objects
of study of the rare-event simulation literature.  
An \emph{estimator} is a r.v.\ $Z(x)$ with $\Exp Z(x)=\Prob(S_n > x)$ and its efficiency
is judged by ratios of the form $r_p(x)=\Exp Z(x)^2/\Prob(S_n > x)^p$. The
estimator will improve upon crude Monte Carlo simulation if $r_1(x)\to 0$ as $x\to\infty$. It is said to have bounded relative error if $r_2(x) $ stays bounded as $x\to\infty$ and to
exhibit logarithmic efficiency if $r_{2-\epsilon}(x)\to 0$ for all $\epsilon>0$ which in turn
will hold if $\Exp Z(x)^2\approx_{\log} \Prob(S_n > x)^2$. These two concepts are
usually considered in some sense optimal. For a survey, see Chapters 
V--VI in~\cite{SSAA}.

The conventional light-tailed rare-event folklore says that a particular kind of importance sampling, \emph{exponential tilting}, is often close to optimal. Here
instead of
$\indi(S_n>x)$ one returns
\[ Z_\theta(x)\  = \
\indi\{S_n > x\} \times L_\theta\quad \text{where\ \ }L_\theta=\widehat{F}[\theta]^n \exp\{ {-}\theta S_n\} \]
where $ X_1,\ldots,X_n $ are i.i.d.\ with density  $f_\theta(y) = \e^{\theta y} f(y) / \widehat{F}(\theta)$
 rather than the given density $f(x)$, and $\theta$ is chosen such that
$\Exp_\theta X =x/n$, that is, $\theta=\lambda(x/n)$. The standard efficiency
results do, however, require both $n\to\infty$ and $x\to\infty$ such that $nx\sim z$
for some $z>\Exp X$ and therefore
do not deal with a fixed $n$, the object of this paper. It is believed that the scheme
is still often close to optimal in this setting, but very few rigorous results in this direction
has been formulated. We give one such in Proposition~\ref{Prop:PL} below.

One problem that arises is how to simulate from $f_\theta$.
Proposition~\ref{Prop:18.12a} tells us that $f_\theta$ is asymptotically normal
with mean $x/n$ and variance $1/\lambda'(x/n)$ when $\theta=\lambda(x/n)$.
So we simulate using acceptance--rejection with a moment-matched
gamma distribution as proposal, and our acceptance ratio will increase to 1 as $x \to \infty$.
To be specific, we take a $\mathsf{Gamma}(a, b)$ proposal, which has a density
$f_{a,b}(y) \propto y^{a-1}\e^{-b y}$, where $a = x^2 \lambda'(x/n)/n^2$, and $b=x \lambda'(x/n)/n$.
The reason we do not directly use a the limiting normal distribution as a proposal is that the tail of the normal distribution is too light when $\beta \in (1,2)$.

\begin{commentaryRemark}
The acceptance ratio can be improved for small $x$ by locally searching for the optimal proposal, that is, the distribution with parameters
\[ (\mu^*,\sigma^*) = \argmin_{\mu,\sigma > 0} \,\, \max_{y \ge 0} \, \frac{ f_{\lambda(x/n)}(y) }{ f_{\mathrm{Prop}}(y ; \mu,\sigma^2) } \,. \]
The asymptotic $(\mu,\sigma)=(x/n, 1/\sqrt{\lambda'(x/n)})$ can be used as the initial search point. In experiments, it seems that the asymptotic variance is close to optimal, whereas some efficiency can be gained by adjusting the mean parameter.
\halmoss\end{commentaryRemark}

\begin{proposition}\label{Prop:PL}
The estimator $Z_\theta(x)$ exhibits logarithmic efficiency.
\end{proposition}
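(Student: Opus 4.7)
The plan is to show the upper estimate $\Exp Z_\theta(x)^2 \leq_{\log} \Prob(S_n>x)^2$ and combine it with the trivial Jensen lower bound $\Exp Z_\theta(x)^2 \geq (\Exp Z_\theta(x))^2 = \Prob(S_n>x)^2$ to conclude $\Exp Z_\theta(x)^2 \approx_{\log} \Prob(S_n>x)^2$, which by the definition recalled just before the statement implies logarithmic efficiency.

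First I would write out the second moment as
\[
\Exp Z_\theta(x)^2 \;=\; \widehat{F}[\theta]^{2n}\,\Exp_\theta\bigl[\e^{-2\theta S_n}\,;\,S_n>x\bigr]\,.
\]
Since $\theta=\lambda(x/n)>0$ and the event forces $S_n>x$, the trivial bound
$\e^{-2\theta S_n}\leq \e^{-2\theta x}$ gives
\[
\Exp Z_\theta(x)^2 \;\leq\; \widehat{F}[\theta]^{2n}\,\e^{-2\theta x}\,.
\]
This estimate, standard in the exponential-tilting folklore, will be the crux. Taking logs and using Proposition~\ref{Prop:18.12a}, namely $\widehat{F}[\lambda(y)]\sim\sqrt{2\pii/\lambda'(y)}\,\gamma(y)\e^{(\beta-1)y^\beta}$, I would observe (after noting $\log\gamma(y)=\oh(y^\beta)$ and $\log\lambda'(y)=\oh(y^\beta)$, which follow from the flatness condition \eqref{17.12d} and \eqref{17.12c}) that
\[
\log\widehat{F}\bigl[\lambda(x/n)\bigr] \;\sim\; (\beta-1)(x/n)^\beta .
\]
With $\theta x = \lambda(x/n)\,x = \beta(x/n)^{\beta-1}\,x = \beta x^\beta/n^{\beta-1}$, the right-hand side satisfies
\[
\log\bigl(\widehat{F}[\theta]^{2n}\e^{-2\theta x}\bigr)
\;\sim\; 2n(\beta-1)(x/n)^\beta - 2\beta x^\beta/n^{\beta-1}
\;=\; -\,\frac{2x^\beta}{n^{\beta-1}}\,.
\]

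Next I would invoke Corollary~\ref{SACor4.2} in the standard normalisation $c=1$, which yields $\log\Prob(S_n>x)\sim -x^\beta/n^{\beta-1}$ and therefore $\log\Prob(S_n>x)^2\sim -2x^\beta/n^{\beta-1}$, the same leading order as the upper bound just derived. Combining with Jensen's lower bound $\log\Exp Z_\theta(x)^2\geq 2\log\Prob(S_n>x)$ gives the sandwich
\[
2\log\Prob(S_n>x) \;\leq\; \log\Exp Z_\theta(x)^2 \;\leq\; 2n\log\widehat{F}[\theta]-2\theta x \;\sim\; 2\log\Prob(S_n>x),
\]
so the ratio tends to $1$, i.e.\ $\Exp Z_\theta(x)^2 \approx_{\log}\Prob(S_n>x)^2$.

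I do not anticipate a real obstacle: the two sides of the sandwich have the same leading order essentially because the exponent in the Esscher transform is designed to make the cancellation $n(\beta-1)(x/n)^\beta - \beta x^\beta/n^{\beta-1}=-x^\beta/n^{\beta-1}$ work. The only mildly delicate point is that Proposition~\ref{Prop:18.12a} is a sharp (rather than merely logarithmic) asymptotic for $\widehat{F}[\lambda(y)]$, so the subexponential prefactors $\gamma$, $\sqrt{1/\lambda'}$ are absorbed harmlessly in the $\approx_{\log}$ comparison; the regularity built into \eqref{17.12c}--\eqref{17.12d} (and the explicit Weibull form $\lambda(x)=\beta x^{\beta-1}$) is exactly what guarantees this. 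If one wanted bounded relative error rather than logarithmic efficiency, the same estimate would not suffice, and one would need sharp rather than log-asymptotics for $\Exp_\theta[\e^{-2\theta(S_n-x)};S_n>x]$; but for logarithmic efficiency the above argument is enough.
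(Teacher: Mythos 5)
Your proof is correct and takes essentially the same route as the paper's: bound the likelihood ratio by $\e^{-\theta x}\widehat F[\theta]^n$ on $\{S_n>x\}$, use \eqref{18.12a} for $\log\widehat F[\lambda(x/n)]\sim(\beta-1)(x/n)^\beta$ and Corollary~\ref{SACor4.2} for $\log\Prob(S_n>x)\sim -x^\beta/n^{\beta-1}$, and observe that the exponents cancel exactly. The paper arrives at the same cancellation by first converting $\Exp_\theta[L_\theta^2;S_n>x]=\Exp[L_\theta;S_n>x]\le\e^{-\theta x}\widehat F[\theta]^n\Prob(S_n>x)$ (keeping one factor of $\Prob$ explicit) rather than bounding $L_\theta^2$ directly as you do; this is only a bookkeeping difference, and your version, which adds the explicit Jensen sandwich, is if anything slightly cleaner.
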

\begin{proof}
We first note that
\begin{align*}
 \Exp_\theta[ Z_\theta(x)^2 ]\ =\ \Exp_\theta[L_\theta^2;\,S_n>x]
 \ =\ \Exp[L_\theta;\,S_n>x]\ \le\ \e^{-\theta x } \widehat F[\theta]^n  \Prob(S_n > x)\,.
\end{align*}
By Corollary~\ref{SACor4.2} and \eqref{18.12a},
\[\Fb^{*n}(x)\ \approx_{\log}\ \exp \{ n(x/n)^\beta \}\,,\ \  \widehat{F}[\lambda(x/n)]^{n}\ \approx_{\log}\ \exp \{ n(\beta-1)(x/n)^\beta \} \,.\]
From $\theta=\lambda(x/n)=\beta (x/n)^{\beta-1}$ we then get
\begin{align*}\frac{ \Var_{\theta}(Z_\theta(x)) }{ \Prob(S_n>x) }\ &\le\ 
\frac{ \Exp_\theta[ Z_\theta(x)^2 ] }{ \Prob(S_n>x) }\\ &\le_{\log}\ 
\exp\bigl\{-\theta x + n(\beta-1)(x/n)^\beta + n (x/n)^{\beta}\bigr\} \\
&=\ \exp\bigl\{-\beta (x/n)^{\beta-1} x + n\beta(x/n)^\beta\bigr\} \ =\  1, 
\end{align*}
completing the proof.
\end{proof}

Some  estimators  based on conditional Monte Carlo ideas
are discussed in \cite{SACdMC} and efficiency properties derived in some
special cases. The algorithms do improve upon crude Monte Carlo, though logarithmic
efficiency is not obtained. The advantage is, however, that they are much easier implemented than the above exponential tilting scheme. The next two propositions extend 
results of \cite{SACdMC} to more general tails. 

\begin{proposition}\label{Prop:SA7.2a} Consider the conditional Monte Carlo
estimator $Z_{\rm Cd}(x)=\Fb(x-S_{n-1})$ of $\Prob(S_n>x)$. Then
$\limsup r_p(x)<\infty$ whenever $p<p_n$ where $p_n=n^{\beta-1}c_n$ with $c_n$ given by
\eqref{SA7.2f} below. Here $p_n>1$.
\end{proposition}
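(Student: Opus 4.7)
The plan is to apply the Laplace method to
\[ \Exp Z_{\rm Cd}(x)^2 \ =\ \int_0^x \Fb(x-z)^2 f^{*(n-1)}(z)\,\dd z\ +\ \Prob(S_{n-1}>x), \]
then compare the resulting asymptotic against $\Prob(S_n>x)^p$ from Corollary~\ref{SACor4.2}. First I would invoke Corollary~\ref{SACor4.2} to write $f^{*(n-1)}(z) \sim K_1 z^{\alpha(n-1)+\beta-1}\e^{-cz^\beta/(n-1)^{\beta-1}}$, and combine with $\Fb(x-z)^2 \sim K_2 (x-z)^{2\alpha}\e^{-2c(x-z)^\beta}$ to identify the leading exponent of the integrand. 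The boundary term $\Prob(S_{n-1}>x)$ decays like $\e^{-cx^\beta/(n-1)^{\beta-1}}$, and since $(\nu+1)^{\beta-1}>2(n-1)^{\beta-1}$ (with $\nu$ defined below) this rate strictly exceeds the rate of the main contribution derived next, so the boundary term will be absorbed into the error.

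Next I would locate the saddle: the positive exponent $h(z):=cz^\beta/(n-1)^{\beta-1}+2c(x-z)^\beta$ is strictly convex on $(0,x)$, and its unique minimizer $z^\ast$ solves $z^\ast/(n-1)=2^{1/(\beta-1)}(x-z^\ast)$. Writing $\nu=(n-1)2^{1/(\beta-1)}$, this gives $z^\ast=\nu x/(\nu+1)$ and, using $\nu^\beta/(n-1)^{\beta-1}=2\nu$, one finds $h(z^\ast)=2cx^\beta/(\nu+1)^{\beta-1}$. A Laplace argument parallel to the proof of Theorem~\ref{Th:9.11a} -- substitute $z=z^\ast + y\kappa/x^{\beta-1}$ for a suitable $\kappa$, Taylor expand the exponent around $z^\ast$, truncate at $a_{\pm}x$ with $0<a_-<z^\ast/x<a_+<1$ so as to remain in the range where the tail asymptotic for $\Fb$ applies, and close with dominated convergence -- then produces
\[ \Exp Z_{\rm Cd}(x)^2 \ \sim\ K\,x^{\gamma'}\exp\!\Bigl\{-\tfrac{2c\, x^\beta}{(\nu+1)^{\beta-1}}\Bigr\} \]
for explicit but, here, unimportant constants $K,\gamma'$.

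Finally, combining with $\Prob(S_n>x)^p\sim\,\mathrm{const}\cdot x^{p\alpha(n)}\exp\{-pc(n)x^\beta\}$ and $c(n)=c/n^{\beta-1}$, the ratio $r_p(x)$ tends to zero exponentially whenever $2/(\nu+1)^{\beta-1}>p/n^{\beta-1}$, i.e.\ whenever $p<p_n=2n^{\beta-1}/((n-1)2^{1/(\beta-1)}+1)^{\beta-1}$, so one reads off $c_n=2/((n-1)2^{1/(\beta-1)}+1)^{\beta-1}$, matching \eqref{SA7.2f}. The inequality $p_n>1$ reduces to $2^{1/(\beta-1)}\cdot n > (n-1)2^{1/(\beta-1)}+1$, i.e.\ $2^{1/(\beta-1)}>1$, which holds for every $\beta>1$. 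The main technical obstacle will be the Laplace step itself: the asymptotic form of $\Fb(x-z)$ degenerates near $z=x$, so the cutoff at $a_+x$ and the bound $\Fb\le 1$ on $[a_+x,x]$ need to be handled in tandem with the polynomial prefactors $(x-z)^{2\alpha}$ and $z^{\alpha(n-1)+\beta-1}$ -- complications of the same flavour as, and no harder than, those already managed in Theorem~\ref{Th:9.11a}.
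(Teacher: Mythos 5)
Your proof is correct and takes essentially the same approach as the paper: both reduce $\Exp Z_{\rm Cd}(x)^2$ to the convolution integral $\int \Fb(x-z)^2 f^{*(n-1)}(z)\,\dd z$ and extract the exponential rate via the Laplace method. The only presentational difference is that the paper directly invokes Theorem~\ref{Th:9.11a} with $c_1 = 2$, $c_2 = 1/(n-1)^{\beta-1}$ to read off $c_n = c_1\theta_1^\beta + c_2\theta_2^\beta = \bigl(2 + 2^{\beta/(\beta-1)}(n-1)\bigr)/(1+\mu)^\beta$ with $\mu = 2^{1/(\beta-1)}(n-1)$, whereas you locate the saddle $z^\ast = \nu x/(\nu+1)$ by hand (with $\nu=\mu$) and arrive at $c_n = 2/(\nu+1)^{\beta-1}$; the two expressions agree because $2 + 2^{\beta/(\beta-1)}(n-1) = 2(1+\mu)$.
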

\begin{proof} We have
$\Exp Z_{\rm Cd}(x)^2\,=\, \int \Fb(x-y)^2 f^{*(n-1)}(y)\, \dd y$
where the asymptotics of the integral is covered by Theorem~\ref{Th:9.11a}. In the setting
there, $c_1=2$, $c_2=1/(n-1)^{\beta-1}$ which gives $\theta_1=1/(1+\mu)$,
$\theta_2=\mu/(1+\mu)$ where $\mu=2^{1/(\beta -1)}(n-1)$. The result gives that
$\Exp Z_{\rm Cd}(x)^2\approx_{\log} \e^{-c_nx^\beta}$ where
\begin{equation}\label{SA7.2f}c_n\ = c_1\theta_1^\beta+c_2\theta_2^\beta\ =\ \frac{2+2^{\beta/(\beta-1)}(n-1)}
{\bigl(1+2^{1/(\beta-1)}(n-1)\bigr)^\beta}\,.
\end{equation}
Since $\Prob(S_n>x)\approx_{\log}\e^{-x^\beta/n^{\beta-1}}$, this implies the first assertion of the proposition. To see that
$p_n>1$, note that for $a>1$
\[ n^{\beta-1}\frac{a^{\beta-1}+a^\beta (n-1)}{\bigl(1+a(n-1)\bigr)^\beta}\ =\ \Bigl[\frac{na}{1+a(n-1)}\Bigr]^{\beta-1}\ >\ \Bigl[\frac{na}{na}\Bigr]^{\beta-1}\ =\ 1
\]
and take $a=2^{1/(\beta-1)}$.
\end{proof}

We finally consider the so-called Asmussen--Kroese estimator
\begin{equation}\label{AK}
Z_{\rm AK}(x)\ =\ n\,\Fb\bigl(M_{n-1}\vee(x-S_{n-1})\bigr)\,.
\end{equation}
where $M_{n-1}=\max(X_1,\ldots,X_{n-1})$. It was initially developed in~\cite{AK} with heavy tails
in mind, but it was found empirically in~\cite{SACdMC} that it also provides some
variance reduction for light tails, in fact more than $Z_{\rm Cd}(x)$. We have:\\
\begin{proposition}\label{Prop:SA7.2b} Consider the 
estimator $Z_{\rm AK}(x)$ of $\Prob(S_n>x)$ with $n=2$. Then
$\limsup r_p(x)<\infty$ whenever $p<3/2$. 
\end{proposition}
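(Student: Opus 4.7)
The plan is to upper-bound $\Exp[Z_{\rm AK}(x)^2]$ on the logarithmic scale and compare with the known decay $\Prob(S_2>x)^p \approx_{\log} \exp(-2p x^\beta/2^\beta)$ provided by Corollary~\ref{SACor4.2} applied with $n=2$ (noting $c(2) = 1/2^{\beta-1} = 2/2^\beta$). The target is to show $\Exp[Z_{\rm AK}(x)^2]$ decays at rate at least $\exp(-3x^\beta/2^\beta)$, as then $r_p(x) \to 0$ whenever $2p-3 < 0$, i.e.\ $p<3/2$.

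First I would write $Z_{\rm AK}(x) = 2\Fb(X_1 \vee (x - X_1))$ and split the expectation over $\{X_1 \le x/2\}$ and $\{X_1 > x/2\}$, on which the maximum equals $x - X_1$ and $X_1$ respectively, to obtain
\begin{equation*}
\tfrac{1}{4}\Exp Z_{\rm AK}(x)^2\ =\ \int_0^{x/2} \Fb(x-y)^2 f(y)\,\dd y\ +\ \int_{x/2}^\infty \Fb(y)^2 f(y)\,\dd y.
\end{equation*}
Next I would apply the Weibull-like bounds $\Fb(y) \le C y^\alpha\e^{-y^\beta}$ and $f(y) \le C y^{\alpha+\beta-1}\e^{-y^\beta}$. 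For the first integral the exponent of the integrand is $-h(y)$ with $h(y) = 2(x-y)^\beta + y^\beta$. Since $h$ is convex on $[0,x]$ and its unconstrained minimizer $y^\star = 2^{1/(\beta-1)}x/(1+2^{1/(\beta-1)})$ strictly exceeds $x/2$ for every $\beta > 1$, $h$ is strictly decreasing on $[0,x/2]$, hence $\inf_{[0,x/2]} h = h(x/2) = 3(x/2)^\beta$. For the second integral the exponent is $-3y^\beta$, strictly decreasing, so its supremum on $[x/2,\infty)$ is attained at $y=x/2$ with the same value. After routine integration (absorbing the polynomial prefactors), each integral is bounded by $C(x)\exp(-3(x/2)^\beta)$ with $C(x)$ of at most polynomial growth.

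Combining these bounds,
\begin{equation*}
r_p(x)\ \le\ C(x)\exp\bigl((2p-3)x^\beta/2^\beta\bigr),
\end{equation*}
which tends to $0$ whenever $p<3/2$, and in particular yields $\limsup_{x\to\infty} r_p(x) < \infty$ for all such $p$.

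The main obstacle I expect is the bookkeeping of the slowly-varying and polynomial prefactors that accompany each exponential, making sure that integrating over intervals of length $O(x)$ introduces only factors polynomial in $x$ which are harmlessly dominated by the decisive exponential saving. The key analytic input is the monotonicity of $h$ on $[0,x/2]$, which reduces to observing that the interior stationary point of $h$ lies strictly to the right of $x/2$ for every $\beta>1$; this boundary behavior is precisely what lets $Z_{\rm AK}$ improve upon $Z_{\rm Cd}$ (whose analysis in Proposition~\ref{Prop:SA7.2a} is governed by that same interior saddle).
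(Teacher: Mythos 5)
Your argument is correct and, for the key integral over $\{X_1\le x/2\}$, takes a cleaner route than the paper. You observe directly that the Laplace exponent $h(y)=2(x-y)^\beta+y^\beta$ is convex with unconstrained minimizer $y^\star=2^{1/(\beta-1)}x/\bigl(1+2^{1/(\beta-1)}\bigr)>x/2$, so $h$ is decreasing on $[0,x/2]$ and its infimum there is the boundary value $h(x/2)=3(x/2)^\beta$. The paper instead changes variables to $4\int_{x/2}^\infty\Fb(y)^2f(x-y)\,\dd y$, splits again at $y=ax$ with $a=(3/2)^{1/\beta}/2$, bounds the far region crudely via $f=\Oh(1)$, and applies the convexity inequality $(u+v)^\beta\ge u^\beta+\beta v u^{\beta-1}$ on the near region. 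Both routes rest on the same fact---the saddle $y^\star$ sits strictly outside the truncated domain---but you make it explicit, which is the conceptually satisfying explanation of why $Z_{\rm AK}$ beats $Z_{\rm Cd}$, whose rate in Proposition~\ref{Prop:SA7.2a} is set by that interior saddle.

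The one loose end: the pointwise bounds $\Fb(y)\le Cy^\alpha\e^{-y^\beta}$ and $f(y)\le Cy^{\alpha+\beta-1}\e^{-y^\beta}$ are asymptotic and cannot be assumed uniformly down to $y=0$ (for example $\Fb(0)=1$, and $f$ need not be bounded, let alone vanish, near the origin). The fix is harmless: carve out $[0,\epsilon x]$ for small fixed $\epsilon>0$, where $\int_0^{\epsilon x}\Fb(x-y)^2f(y)\,\dd y\le\Fb\bigl((1-\epsilon)x\bigr)^2\approx_{\log}\e^{-2(1-\epsilon)^\beta x^\beta}$, and this is dominated by $\e^{-3x^\beta/2^\beta}$ since $2>3/2^\beta$ for $\beta>1$; on $[\epsilon x,x/2]$ both $y$ and $x-y$ are of order $x$, the asymptotic bounds apply, and your monotonicity argument goes through. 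With this patch the proof is complete. (The paper's $I_2$ bound $f(x-y)=\Oh(1)$ glosses over a similar point, so this is a shared feature rather than a deficiency particular to your approach.)
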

\begin{proof} When $n=2$, we have $M_{n-1}=S_{n-1}=X_1$ and so the analysis
splits into an $X_1>x/2$ and an $X_1\le 2$ part. The first is
\begin{align*}\MoveEqLeft
 \Exp\bigl[Z_{\rm AK}(x)^2;\,X_1>x/2\bigr]\ =\ 4\int_{x/2}^\infty
 \Fb(y)^2f(y)\,\dd y\\ & \approx_{\log}\ \int_{x/2}^\infty
\e^{-2y^\beta}\e^{-y^\beta}\,\dd y\  \approx_{\log}\ \e^{-3x^\beta/2^\beta}.
\end{align*}

The second part is
\begin{align*}\MoveEqLeft
 \Exp\bigl[Z_{\rm AK}(x)^2;\,X_1\le x/2\bigr]\ =\ 4\int_{-\infty}^{x/2}
 \Fb(x-y)^2f(y)\,\dd y\\ &  \ =\ 4\int_{x/2}^\infty
 \Fb(y)^2f(x-y)\,\dd y\ = \ 4I_1+4I_2
\end{align*}
where $I_1$ is the integral over $[x/2,ax)$ and $I_2$ is the one over $[ax,\infty)$.
Here we take
$a=(3/2)^{1/\beta}/2$; 
since $\beta>1$, we have $a<3/4<1$. Let further $b=a-1/2$. Then
\begin{align*}
I_2\ &=\ \int_{ ax}^\infty  \Fb(y)^2\Oh(1)\dd y\ \approx_{\log} \ \int_{ ax}^\infty  \e^{-2x^\beta}\Oh(1)\dd y\\
 &\approx_{\log}\    \e^{-2a^\beta x^\beta}\ =\  \e^{-3 x^\beta/2^\beta}\,,\\
I_1\ &\approx_{\log}\ \int_{x/2}^{ ax} \exp\bigl\{-2y^\beta-(x-y)^\beta\bigr\}\\& =\ 
\int_0^{ bx} \exp\bigl\{-2(x/2+z)^\beta-(x/2-z)^\beta\bigr\}\,\dd z .
\end{align*}
By convexity of $v\mapsto v^\beta$, we have \[(u+v)^\beta\ =\ u^\beta(1+v/u)^\beta \ \ge u^\beta(1+\beta v/u)\ =\ 
 u^\beta+\beta v  u^{\beta-1}\]
 for $u>0$ and $-u<v<\infty$. Taking $u=x/2$ gives
 \begin{align*}I_2\ \le_{\log}\ \int_0^{ bx} \exp\bigl\{-3x^\beta/2^\beta-\beta z(x/2)^{\beta-1}\bigr\}\,\dd z\ =\
 \e^{-3 x^\beta/2^\beta}\oh(1)\,,
 \end{align*}
 completing the proof.
 \end{proof}

\appendix* \label{S:ProofofTheorem}

For the proof of Theorem~{\ref{Th:19.12a}, we first note that,
as shown in BKR, that as $x\to \infty$
\begin{align}
\label{2.12h} \frac{\lambda'(x)}{{\lambda(x)}^2}\ &\to\ 0.\\
\label{2.12ib} \frac{\gamma'(x)}{{\sqrt{\lambda'(x)}}\gamma(x)}\ &\to\ 0.
\end{align}
In view of Proposition 3.2 in BKR, \eqref{2.12ib} need not hold for $\gamma$ itself but does for  a tail equivalent version, with which $\gamma$ can be replaced w.l.o.g. This implies
\begin{align}
\label{19.12c} \lambda\text{\ is flat for }\psi.
\end{align}
Indeed, given $y$ it holds for some $x^*$ between $0$ and $x+y/\sqrt{\lambda'(x)}$ that
\[\lambda\bigl(x+y/\sqrt{\lambda'(x)}\bigr)\ =\ \lambda(x)+\frac{\lambda'(x^*)}{\sqrt{\lambda'(x)}}y\ =\
\lambda(x)+\Oh\bigl(\sqrt{\lambda'(x)}\bigr)\ = \lambda(x)\bigl(1+\oh(1)\bigr)\]
where the $\Oh(\cdot)$ estimate follows from a known uniformity property of self-neglecting functions and the $\oh(\cdot)$ estimate
by \eqref{2.12h}.

Using further \eqref{2.12h} we have that $e=1/\lambda$ is self-neglecting.

\begin{proofof}{Theorem~{\ref{Th:19.12a}} {\rm (i)}}
Write $\overline{H}(x)=\gamma(x)\e^{-\psi(x)}/\lambda(x)$. 
Then
\begin{align*}\overline{H}'(x)\ &=\ \Bigl[\gamma(x)+\frac{\gamma'(x)}{\psi'(x)}-
\frac{\gamma(x)\psi''(x)}{\psi'(x)^2}\Bigr]\e^{-\psi(x)}\\
&=\ \gamma(x)\Bigl[1+\frac{\gamma'(x)}{\gamma(x)\psi'(x)}-
\frac{\psi''(x)}{\psi'(x)^2}\Bigr]\e^{-\psi(x)}.
\end{align*}
Here the last term in $[\cdot]$ goes to 0 according to \eqref{2.12h}. This
together with \eqref{2.12ib}
also gives
\[\frac{\gamma'(x)}{\gamma(x)\psi'(x)}\ =\ \frac{\gamma'(x){\psi''}^{-1/2}}{\gamma(x)}\cdot
\frac{{\psi''}^{1/2}}{\psi'(x)}\ =\ \oh(1)\cdot\,\oh(1)\ =\ \oh(1).\]
Thus $\overline{H}'(x)\sim f(x)$ which implies $\overline{H}(x)\sim \Fb(x)$.
\end{proofof}

We also have this an alternative proof for part (i).

\begin{proofof}{Theorem~{\ref{Th:19.12a}} {\rm (i)}}
Using integrations by parts yields
\begin{align*}\int_x^\infty f(y)\,\dd y\ &= \int_0^\infty  \frac{\gamma(x+y)}{\psi'(x+y)}
\cdot \psi'(x+y)\e^{-\psi(x+y)} \,\dd y  \\ &=\
 \frac{\gamma(x)}{\psi'(x)}\e^{-\psi(x)}\ -\  \int_0^\infty  \frac{\dd}{\dd y}\Bigl[\frac{\gamma(x+y)}{\psi'(x+y)}\Bigr]
\cdot \e^{-\psi(x+y)} \,\dd y.
\end{align*}
But by the same estimates as in Proof 1, the first part of the integrand is $\oh\bigl(\gamma(x)\bigr)$
so that the whole integral is $\oh\bigl(\Fb(x)\bigr)$.
\end{proofof}

The following lemma is just a reformulation of part (ii) of the theorem, proved in BKR.
\begin{lemma}\label{Lemma:19.12a} For any two pairs $(\gamma_1,\psi_1)$, $(\gamma_2,\psi_2)$
satisfying the assumptions of Section~\ref{S:Intr}, it holds that
\begin{equation}
	\int_{-\infty}^\infty \gamma_1(z)\e^{-\psi_1(z)}\cdot \gamma_2(x-z)\e^{-\psi_2(x-z)}\,\dd z
\end{equation}
has the asymptotics given by Theorem~\ref{Th:19.12a}(ii).
\end{lemma}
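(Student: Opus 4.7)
My plan is to evaluate the convolution integral by Laplace's method, concentrated at the unique minimizer $q_1=q_1(x)$ of the exponent $\psi_1(z)+\psi_2(x-z)$. Differentiating, this minimizer is characterized by $\lambda_1(q_1)=\lambda_2(x-q_1)$, which together with $q_2:=x-q_1$ is exactly the saddle system~\eqref{19.12a} defining $q_1,q_2$ and hence $\psi(x)=\psi_1(q_1)+\psi_2(q_2)$ and $\lambda(x)=\lambda_1(q_1)=\lambda_2(q_2)$. Implicit differentiation of $q_1+q_2=x$ and $\lambda_1(q_1)=\lambda_2(q_2)$ gives $q_1'=\lambda_2'(q_2)/[\lambda_1'(q_1)+\lambda_2'(q_2)]$, $q_2'=\lambda_1'(q_1)/[\lambda_1'(q_1)+\lambda_2'(q_2)]$, and the key identity
\[ \lambda'(x) \,=\, \frac{\lambda_1'(q_1)\lambda_2'(q_2)}{\lambda_1'(q_1)+\lambda_2'(q_2)}, \qquad \text{so that } \sqrt{\frac{2\pi}{\lambda_1'(q_1)+\lambda_2'(q_2)}} \,=\, \sqrt{\frac{2\pi\,\lambda'(x)}{\lambda_1'(q_1)\lambda_2'(q_2)}}, \]
which matches the constant $\gamma(x)/[\gamma_1(q_1)\gamma_2(q_2)]$ claimed in part~(ii).

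Next, I rescale around the saddle. Writing $z=q_1+t/\sqrt{\lambda'(x)}$ and Taylor-expanding both $\psi_i$ around $q_i$ to second order gives
\[ \psi_1(z)+\psi_2(x-z) \,=\, \psi(x) + \tfrac{t^2}{2}\,\frac{\lambda_1'(q_1^*)+\lambda_2'(q_2^*)}{\lambda'(x)}, \]
with $q_i^*$ between $q_i$ and the argument. Using~\eqref{17.12c} (self-neglecting of $1/\sqrt{\lambda_i'}$, plus the fact that $1/\sqrt{\lambda'(x)}$ is controlled by $1/\sqrt{\lambda_i'(q_i)}$ through the identity above), on any bounded $t$-interval this quadratic coefficient converges to $1$, so the exponent becomes $\psi(x)+t^2/2+o(1)$. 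The flatness of $\gamma_i$ for $\psi_i$~\eqref{17.12d} then yields $\gamma_1(z)\gamma_2(x-z)\to\gamma_1(q_1)\gamma_2(q_2)$ uniformly in $t$ on bounded intervals.

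The main technical obstacle is proving that the contribution from outside a shrinking-in-$x$ but growing-in-$t$ window $|t|\le T(x)$ is negligible. I would handle this in two stages. On a moderate window $|t|\le A$ for fixed large $A$, dominated convergence together with the uniform Gaussian estimate from~\eqref{2.12h} gives the contribution $\sqrt{2\pi/[\lambda_1'(q_1)+\lambda_2'(q_2)]}\,\gamma_1(q_1)\gamma_2(q_2)\e^{-\psi(x)}\,[1+o(1)]$ up to Gaussian tails that vanish as $A\to\infty$. For $|t|\ge A$, I use convexity of $\psi_1+\psi_2(x-\cdot)$ and the fact that $\lambda_i$ grows (\eqref{lambdaI}), which forces the exponent to exceed $\psi(x)+cA^2$ for some $c>0$; combined with the crude growth bound on $\gamma_i$ coming from \eqref{2.12ib} (which prevents $\gamma_i$ from growing faster than $\e^{o(\psi_i)}$), this kills the tail contribution.

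Putting the windows together and sending $A\to\infty$ yields
\[ \int_{-\infty}^\infty \gamma_1(z)\gamma_2(x-z)\,\e^{-\psi_1(z)-\psi_2(x-z)}\,\dd z \,\sim\, \sqrt{\frac{2\pi\,\lambda'(x)}{\lambda_1'(q_1)\lambda_2'(q_2)}}\,\gamma_1(q_1)\,\gamma_2(q_2)\,\e^{-\psi(x)}, \]
which is exactly $\gamma(x)\e^{-\psi(x)}$ with $\gamma,\psi$ as in~(ii). The only delicate point is the tail estimate outside the $A$-window, where the combination of convexity of $\psi$ and the sub-exponential growth of $\gamma$ forced by \eqref{2.12ib} is precisely what BKR~needed; I would import their uniformity lemma for self-neglecting functions to make the change of variables estimates locally uniform.
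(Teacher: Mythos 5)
The paper does not actually prove Lemma~\ref{Lemma:19.12a}: it declares it a reformulation of Theorem~1.1 of BKR, and in the proof of part~(ii) of Theorem~\ref{Th:19.12a} only verifies the algebraic identity $\lambda(x)=\lambda_1(q_1)=\lambda_2(q_2)$ via $q_1'+q_2'=1$, deferring the Laplace analysis entirely to BKR. Your attempt to supply the saddle-point argument from scratch is therefore doing genuinely more than the paper does, and the skeleton is correct (locate the saddle via~\eqref{19.12a}, establish $\lambda'(x)=\lambda_1'(q_1)\lambda_2'(q_2)/\bigl(\lambda_1'(q_1)+\lambda_2'(q_2)\bigr)$, then apply Laplace with a cut-off window). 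That is indeed what BKR do.

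However, there is a real bug in the rescaling step, and it is not cosmetic. You set $z=q_1+t/\sqrt{\lambda'(x)}$, but since $\lambda'(x)=\lambda_1'(q_1)\lambda_2'(q_2)/(\lambda_1'(q_1)+\lambda_2'(q_2))\le\min\bigl(\lambda_1'(q_1),\lambda_2'(q_2)\bigr)$, the increment $|z-q_1|=|t|/\sqrt{\lambda'(x)}$ is at least $|t|/\sqrt{\lambda_i'(q_i)}$, and when $\lambda_1'(q_1)$ and $\lambda_2'(q_2)$ have different orders (which can happen: take $\psi_1(x)=x^2$ and $\psi_2(x)=\e^x$, so $\lambda_1'(q_1)\equiv 2$ while $\lambda_2'(q_2)\to\infty$) this exceeds the $\psi_2$-self-neglecting scale $1/\sqrt{\lambda_2'(q_2)}$ by an unbounded amount. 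So \eqref{17.12c} and \eqref{17.12d} do \emph{not} apply on your window, and your statement that ``$1/\sqrt{\lambda'(x)}$ is controlled by $1/\sqrt{\lambda_i'(q_i)}$'' runs in the wrong direction. Moreover, the claim that ``this quadratic coefficient converges to $1$'' is false: the coefficient is $\bigl(\lambda_1'(q_1^*)+\lambda_2'(q_2^*)\bigr)/\lambda'(x)$, and the limit is $\bigl(\lambda_1'(q_1)+\lambda_2'(q_2)\bigr)^2/\bigl(\lambda_1'(q_1)\lambda_2'(q_2)\bigr)\ge 4$, never $1$. If you carry your steps through as written (exponent $\psi(x)+t^2/2+\oh(1)$, Jacobian $1/\sqrt{\lambda'(x)}$), the Gaussian integral yields $\gamma_1(q_1)\gamma_2(q_2)\e^{-\psi(x)}\sqrt{2\pi/\lambda'(x)}$, which differs from the correct $\sqrt{2\pi/(\lambda_1'(q_1)+\lambda_2'(q_2))}$ by the unbounded factor $(\lambda_1'(q_1)+\lambda_2'(q_2))/\sqrt{\lambda_1'(q_1)\lambda_2'(q_2)}$; your final display is right only because it was transcribed from the theorem statement rather than derived from your steps. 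The fix is to scale with $z=q_1+t/\sqrt{\lambda_1'(q_1)+\lambda_2'(q_2)}$: then $|z-q_1|\sqrt{\lambda_1'(q_1)}\le |t|$ and $|z-q_1|\sqrt{\lambda_2'(q_2)}\le |t|$, so self-neglecting and flatness both apply on bounded $t$; the quadratic coefficient $\bigl(\lambda_1'(q_1^*)+\lambda_2'(q_2^*)\bigr)/\bigl(\lambda_1'(q_1)+\lambda_2'(q_2)\bigr)\to 1$; and with Jacobian $1/\sqrt{\lambda_1'(q_1)+\lambda_2'(q_2)}$ you recover the constant $\sqrt{2\pi/(\lambda_1'(q_1)+\lambda_2'(q_2))}=\sqrt{2\pi\lambda'(x)/(\lambda_1'(q_1)\lambda_2'(q_2))}$ as required. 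The tail estimate also needs more than convexity plus \eqref{2.12ib}: this is exactly the technically demanding part of BKR, and a hand-wave to ``import their uniformity lemma'' does not close it.
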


\begin{proofof}{Theorem~{\ref{Th:19.12a}} {\rm (ii)}}
This is a reformulation of Theorem 1.1 in BKR. 
Since by \eqref{19.12a} $q_1'+q_2'=1$ we have the claimed relation between $\lambda$ and 
$\lambda_1,\lambda_2$, namely 
\bqn{ \label{remm}
 \lambda(x)\ =\ \lambda_1\bigl(q_1(x)\bigr)q_1'(x)+ \lambda_2\bigl(q_2(x)\bigr)q_2'(x)\ =\ \lambda_1(q_1)\ =\ \lambda_2(q_2)
}
establishing the proof. 
\end{proofof}

\begin{proofof}{Theorem~{\ref{Th:19.12a}} {\rm (iii)}}
We have that $\e^{- \psi_i(x)},i=1,2$ is a von-Mises function (see \eqref{polin}) and thus  $\e^{- \psi_i(x)} \in \GMDA(e_i),i=1,2$ with $e_i=1/\lambda_i$. Since further $e_i$'s are self-neglecting and by \eqref{2.12h} $r_i(x)= \sqrt{\lambda_i(x)}/\lambda_i(x) \to 0$
as $x\to \infty$ we have that 
$$ \lim_{x\to \infty}\frac{\gamma_i(x+ e_i(x)y )}{\gamma_i(x)}=
\lim_{x\to \infty}\frac{\gamma_i(x+ yr_i(x)/ \sqrt{\lambda_i(x)} )}{\gamma_i(x)} =1$$
uniformly on bounded $y$-intervals. Hence  $F_i \in \GMDA(e_i)$. In view of Proposition 3.2 in BKR we can find smooth $\gamma_i^*$'s such that $\overline{H}_i(x)= \gamma_i^*(x)\e^{- \psi_i(x)}/\lambda_i(x)$ is asymptotically equivalent to $\overline{F}_i(x)$ as $x\to \infty$. Since also $H_i\in \GMDA(e_i)$ and $\lim_{x\to \infty} \lambda_i(x)= \infty$, then for any $c>0$ we have
$$ \lim_{x\to \infty} \frac{\overline{H_i}(x+ c)}{\overline H_i(x)}=
0, \quad i=1,2.$$
Consequently, Corollary 1 in \cite{EHM} yields $\overline{H_1*H_2}(x) \sim
\overline{F_1*F_2}(x)$ and thus the claim follows from ii).\\ 	
By the above, we can find the asymptotics of $\overline{F_1*F_2}(x)$ assuming that $F_i$'s possess a density, so alternatively we have
\begin{align}\overline{F_1*F_2}(x)\ &=\ \int_{-\infty}^\infty \gamma_1(z)\e^{-\psi_1(x)}\cdot\frac{\gamma_2(x-z)}
{\lambda_2(x-z)} \e^{-\psi_2(x-z)}\,\dd z
\end{align}
But by \eqref{19.12c}, $\gamma_2/\lambda_2$ is flat for $\psi_2$, so using Lemma~\ref{19.12a}
with $\gamma_2$ replaced by $\gamma_2/\lambda_2$ gives that
this integral asymptotically equals $\gamma(x)\e^{-\psi(x)}/\gamma_2\bigl(q_2(x)\bigr)$.
But in view of \eqref{remm} this is the same as $\gamma(x)\e^{-\psi(x)}/\lambda(x)$. 
This completes the proof.
\end{proofof}

\acknowledgements{PJL was supported by an Australian Government Research Training Program Scholarship and an Australian Research Council Centre of Excellence for Mathematical \& Statistical Frontiers Scholarship.}

\end{document}